\documentclass{agtart_a}
\pdfoutput=1

\usepackage[all]{xy}


\title{Completed representation ring spectra of nilpotent groups}
\author{Tyler Lawson}
\givenname{Tyler}
\surname{Lawson}
\address{Department of Mathematics\\
Massachusetts Institute of Technology\\\newline
Cambridge MA 02139\\USA}
\email{tlawson@math.mit.edu}

\subject{primary}{msc2000}{55P60}                
\subject{secondary}{msc2000}{55P43}                
\subject{secondary}{msc2000}{19A22}                

\keyword{S-algebra}
\keyword{R-module} 
\keyword{completion} 
\keyword{Bousfield localization} 
\keyword{representation ring} 

\received{11 April 2005}
\revised{31 October 2005}
\accepted{5 January 2006}
\proposed{}
\seconded{}
\publishedonline{26 February 2006}
\published{26 February 2006}

\volumenumber{6}
\issuenumber{}
\publicationyear{2006}
\papernumber{8}
\startpage{253}
\endpage{285}

\doi{}
\MR{}
\Zbl{}

\arxivreference{}
\arxivpassword{}



\let\xysavmatrix\xymatrix
\def\xymatrix{\disablesubscriptcorrection\xysavmatrix}
\AtBeginDocument{\let\bar\wbar\let\tilde\wtilde\let\hat\what}
\newcommand{\we}{\smash{\rlap{\kern 6pt 
\raise 4pt\hbox{\footnotesize $\sim$}}}\longrightarrow}



\makeatletter
\def\cnewtheorem#1[#2]#3{\newtheorem{#1}{#3}
\expandafter\let\csname c@#1\endcsname\c@thm}


\newtheorem{thm}{Theorem}
\cnewtheorem{cor}[thm]{Corollary}
\cnewtheorem{prop}[thm]{Proposition}
\cnewtheorem{lem}[thm]{Lemma}
\theoremstyle{definition}
\cnewtheorem{defn}[thm]{Definition}
\theoremstyle{remark}
\cnewtheorem{rmk}[thm]{Remark}
\cnewtheorem{exam}[thm]{Example}
\cnewtheorem{case}[thm]{Case}

\makeatother  

\newcommand{\mb}[1]{\mathbb{#1}}
\newcommand{\mf}[1]{\mathfrak{#1}}

\newcommand{\Ext}{\ensuremath{{\rm Ext}}}

\newcommand{\Hom}{\ensuremath{{\rm Hom}}}
\newcommand{\Tor}{\ensuremath{{\rm Tor}}}

\newcommand{\colim}{\ensuremath{\mathop{\rm colim}}}
\newcommand{\hocolim}{\ensuremath{\mathop{\rm hocolim}}}
\newcommand{\holim}{\ensuremath{\mathop{\rm holim}}}
\newcommand{\overto}{\mathop\rightarrow}
\newcommand{\longoverto}{\mathop{\longrightarrow}}

\newcommand{\Irr}{\ensuremath{\mathop{\rm Irr}}}
\newcommand{\Rep}{\ensuremath{\mathop{\rm Rep}}}
\newcommand{\Map}{\ensuremath{{\rm Map}}}

\newcommand{\Uni}{{\rm U}}
\newcommand{\Sp}{{\cal S}p}
\newcommand{\Sym}{{\rm Sym}}

\newcommand{\eilm}[1]{\ensuremath{{\mb H} #1}}
\newcommand{\smsh}[1]{\ensuremath{\mathop{\wedge}_{#1}}}

\newcommand{\mapset}[3]{\ensuremath{\left[#2,#3\right]_{#1}}}
\newcommand{\compl}[1]{\ensuremath{#1^\wedge}}
\newcommand{\loca}[3]{\ensuremath{L^{#1}_{#2}(#3)}}

\begin{document}

\begin{htmlabstract}
In this paper, we examine the "derived completion" of the
representation ring of a pro&ndash;p group G<sub>p</sub>^ with
respect to an augmentation ideal.  This completion is no longer a
ring: it is a spectrum with the structure of a module spectrum over
the Eilenberg&ndash;MacLane spectrum <b>HZ</b>, and can have higher
homotopy information.  In order to explain the origin of some of these
higher homotopy classes, we define a deformation representation ring
functor R[&ndash;] from groups to ring spectra, and show that the map
R[G<sub>p</sub>^]&rarr;R[G] becomes an equivalence after
completion when G is finitely generated nilpotent.  As an
application, we compute the derived completion of the representation
ring of the simplest nontrivial case, the p&ndash;adic Heisenberg group.
\end{htmlabstract}

\begin{asciiabstract}
In this paper, we examine the `derived completion' of the
representation ring of a pro-p group G_p^ with respect to an
augmentation ideal.  This completion is no longer a ring: it is a
spectrum with the structure of a module spectrum over the
Eilenberg-MacLane spectrum HZ, and can have higher homotopy
information.  In order to explain the origin of some of these higher
homotopy classes, we define a deformation representation ring functor
R[-] from groups to ring spectra, and show that the map R[G_p^] -->
R[G] becomes an equivalence after completion when G is finitely
generated nilpotent.  As an application, we compute the derived
completion of the representation ring of the simplest nontrivial case,
the p-adic Heisenberg group.
\end{asciiabstract}

\begin{abstract}
In this paper, we examine the ``derived completion'' of the
representation ring of a pro-$p$ group ${\cal G}_p^\wedge$ with
respect to an augmentation ideal.  This completion is no longer a
ring: it is a spectrum with the structure of a module spectrum over
the Eilenberg--MacLane spectrum $\mathbb{HZ}$, and can have higher
homotopy information.  In order to explain the origin of some of these
higher homotopy classes, we define a deformation representation ring
functor $R[-]$ from groups to ring spectra, and show that the map
$R[{\cal G}_p^\wedge] \to R[{\cal G}]$ becomes an equivalence after
completion when ${\cal G}$ is finitely generated nilpotent.  As an
application, we compute the derived completion of the representation
ring of the simplest nontrivial case, the $p$--adic Heisenberg group.
\end{abstract}

\maketitle

\section{Introduction}
The original Adams spectral sequence provided a streamlined method for
computing the $p$--primary part of the stable homotopy groups of
spheres, or more generally for attempting to compute the group of maps
between two objects in the stable homotopy category.  This process has
seen highly useful generalizations, such as the Adams--Novikov spectral
sequence exposited in Adams \cite{adams74}, and the $E$--nilpotent
completion spectral sequence of Bousfield \cite{bousfield79}.  By
using the same method of proof, Carlsson gave a construction for
modules over a ring $R$ in \cite{carlsson78}.  The underlying method
is that given an object $X$ and a generalized homology theory $E$, one
constructs a ``derived completion'', or $E$--nilpotent completion,
which is a pro-object $\compl{X}_E$, and there is a spectral sequence
for computing the space of maps from an object $Y$ into $\compl{X}_E$.
In the case where $X$ is a finitely generated module over a Noetherian
ring and $E$ is the homology theory $\Tor^R_*(R/I,-)$, the completion
$\compl{X}_E$ is equivalent to the $I$--adic completion of $X$.

With the advent of highly structured categories of modules over
algebra spectra, such as those constructed by
Elmendorf--Kriz--Mandell--May \cite{ekmm97}, it has become possible to
construct an Adams spectral sequence generalizing all of the above
constructions, as in Baker--Lazarev \cite{bl01}.  For example, by
\cite[Theorem~IV.2.4]{ekmm97}, the derived category of modules over a
ring $R$ is equivalent to the homotopy category of module spectra over
the Eilenberg--MacLane spectrum $\eilm{R}$.  (Throughout this paper, we
will use the notation $\eilm{X}$ for the Eilenberg--MacLane spectrum
associated to $X$.)

In this paper, we will be mainly focused on the case where $M = R$, as
an analogue of completing a ring with respect to an ideal with
quotient ring $E$.  When $E$ is an $R$--algebra, the
$E$--nilpotent completion of $R$ (as an $R$--module) can be canonically
expressed as the totalization of the cobar complex
\[
\{ E \smsh{R} \cdots \smsh{R} E \},
\]
with coface and codegeneracy maps given by the unit map $R \to E$ and
multiplication $E \smsh{R} E \to E$ respectively.  (A different
characterization, and another canonical construction, will be given in
\fullref{sec:completions}.)  We write $\compl{R}_E$ for this ``derived
completion'' of $R$ as an $R$--module spectrum.

Our interest is in examining this completion functor in some specific
cases of modules over a non-Noetherian ring.  Let $R$ be a ring, and
$E$ and $M$ chain complexes of left $R$--modules.  Then the completion
$\compl{M}_E$ is a pro-object in the derived category of $R$--modules,
or equivalently a pro-weak equivalence class of chain complexes.
Under certain algebraic conditions, including $\Tor^R_*(E,E)$ being
flat over $H_* E$, there is a spectral sequence
\begin{equation}
  \label{eq:adamsss}
\Ext^{**}_{\Ext_R^*(E,E)} (\Ext_R^*(M,E), H^*(E))
\end{equation}
that abuts to the homology groups of $\compl{M}_E$.  Convergence is a
more delicate question; for example, see Baker--Lazarev \cite{bl01}.

One of the first interesting examples has to do with the relationship
between circle actions and actions of finite cyclic groups.
Specifically, let $\mb Z_p$ be the group of $p$--adic integers under
addition, and let $R[\mb Z_p]$ be its representation ring.  This is
the free abelian group on irreducible continuous complex
representations of $\mb Z_p$, and as such is isomorphic to the group
ring $\mb Z[\mu_{p^\infty}]$, the group ring on the group
$\mu_{p^\infty}$ of $p$--power roots of unity in $\mb C^\times$.  If
$E$ is the $R[\mb Z_p]$--module $\mb Z/p$ with trivial action, one can
compute with the above spectral sequence and find that
\begin{equation}
\label{eq:circlecase}
H_*(\compl{R[\mb Z_p]}_{\mb Z/p}) \cong 
\begin{cases}
\mb Z_p & \hbox{ if } * = 0,1\\
0 & \hbox{ otherwise.}
\end{cases}
\end{equation}
The computation of the completion of the representation ring of the
$p$--adic Heisenberg group $\compl{H}_p$ with respect to the
$R[\compl{H}_p]$--module $\mb Z/p$ was initiated by C Bray in
\cite{bray99}, and completed here.  The result takes the following
form:
\begin{equation}
\label{eq:heiscase}
H_*\left(\compl{R[\compl{H}_p]}_{\mb Z/p}\right) \cong 
\begin{cases}
\widehat \oplus_{\zeta \in \mu_{p^\infty}} \mb Z_p & \hbox{ if } * = 0 \\
\widehat \oplus_{\zeta \in \mu_{p^\infty}} \mb Z_p \oplus \mb Z_p &
\hbox{ if } * = 1 \\
\widehat \oplus_{\zeta \in \mu_{p^\infty}} \mb Z_p & \hbox{ if } * = 2 \\
0 & \hbox{ otherwise.}
\end{cases}
\end{equation}
(Here $\hat \oplus$ denotes the completed direct sum of $p$--complete
modules.)  This computation can be carried out using the spectral
sequence of equation~\ref{eq:adamsss}.  As such, it involves a
significant number of extension problems, especially with respect to
determining an algebra structure possessed by
$\compl{R[\compl{H}_p]}_{\mb Z/p}$.

The purpose of this paper is to explore the origin of higher homology
classes in the case of $R[\compl{{\cal G}}_p]$, where ${\cal G}$ is a
finitely generated nilpotent group and $\compl{{\cal G}}_p$ is its
pro-$p$ completion.  Our contention is that some of this higher
homotopy arises from {\em geometric\/} information about the
representation theory of ${\cal G}$.

Specifically, let $\Irr({\cal G})$ be the set of isomorphism classes
of irreducible representations of ${\cal G}$.  $\Irr({\cal G})$ comes
naturally equipped with the structure of a topological space, and
there is a natural map $\Irr(\compl{{\cal G}}_p) \to \Irr({\cal G})$,
where $\Irr(\compl{{\cal G}}_p)$ has the discrete topology.  (Both of
these topologies arise naturally from the starting point of the
compact--open topology on mapping spaces.)  Applying $\eilm{\mb Z}
\smsh{} \Sigma^\infty_+(-)$, where $\Sigma^\infty_+$ denotes the
unbased suspension spectrum functor, gives a map
\begin{equation}
\label{eq:mainmap}
\eilm{R[\compl{{{\cal G}}}_p]} \to 
\eilm{\mb Z} \smsh{} \Sigma^\infty_+ \Irr({\cal G})
\end{equation}
of $\eilm{\mb Z}$--modules.

\begin{prop}
\label{prop:mainmap}
If ${\cal G}$ is a finitely generated, nilpotent, discrete group,
there exists a natural map of $\mb S$--algebras realizing the map of
equation~\ref{eq:mainmap}.
\end{prop}

\begin{proof}
In \fullref{sec:repringspectra}, we construct a map of commutative
monoids in symmetric spectra $R[\compl{{\cal G}}_p] \to R[{\cal G}]$,
which can be realized as a map of $\mb S$--algebras.
\fullref{sec:compiso}, \fullref{prop:repdga} and
\fullref{cor:finiterepring} show this is equivalent to the map
of equation~\ref{eq:mainmap} in the case where ${\cal G}$ is finitely
generated, nilpotent, and discrete.
\end{proof}

Our main result is the following.
\begin{thm}
\label{thm:main}
The map 
$\eilm{R[\compl{{\cal G}}_p]} \to \eilm{\mb Z} \smsh{}
\Sigma^\infty_+ (\Irr({\cal G}))$ 
induces an equivalence of derived completions
\[
\compl{\eilm{R[\compl{{\cal G}}_p]}}_{\eilm{\mb F_p}} \to
\compl{\eilm{\mb Z} \smsh{} \Sigma^\infty_+ (\Irr({\cal
G}))}_{\eilm{\mb F_p}},
\]
where the derived completion on the left is taken as modules over
$\eilm{R[\compl{{\cal G}}_p]}$ and on the right as modules over
$\eilm{\mb Z} \smsh{} \Sigma^\infty_+(\Irr({\cal G}))$.
\end{thm}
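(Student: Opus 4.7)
The plan is to prove the theorem by induction on the Hirsch length of the finitely generated nilpotent group $\mathcal{G}$, using the compatibility of both sides of the map with central extensions.

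\emph{Reduction.} By \fullref{prop:mainmap} and the results cited there, it suffices to analyze the map of $\mb S$--algebras $R[\compl{\mathcal{G}}_p] \to R[\mathcal{G}]$. A general criterion for a ring map $A \to B$ to induce an equivalence of $E$--nilpotent completions (with $E$ an algebra on both sides) is that the induced map on cobar complexes $E^{\smsh{A}(n+1)} \to E^{\smsh{B}(n+1)}$ be an equivalence for every $n$, for then the two totalizations agree. I will therefore aim to show that $\eilm{\mb F_p} \smsh{\eilm{R[\compl{\mathcal{G}}_p]}} \eilm{\mb F_p} \to \eilm{\mb F_p} \smsh{\eilm{R[\mathcal{G}]}} \eilm{\mb F_p}$ is an equivalence, and argue (along with its iterates) that this propagates to the completions.

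\emph{Base case.} For $\mathcal{G}$ finite, both ring spectra are Eilenberg--MacLane on classical representation rings, $\Irr(\mathcal{G})$ is discrete, and the assertion reduces to the classical fact that modulo $p$ the augmentation ideal of $R[\mathcal{G}]$ is detected by the restriction to a $p$--Sylow subgroup, i.e., that $R[\mathcal{G}] \otimes \mb F_p$ and $R[\compl{\mathcal{G}}_p] \otimes \mb F_p$ have the same augmentation-completion. For $\mathcal{G} = \mb Z$, the relevant computation is that of equation~\ref{eq:circlecase}: one checks directly that the completion of $\eilm{\mb Z} \smsh{} \Sigma^\infty_+ S^1 \simeq \eilm{\mb Z} \vee \susp \eilm{\mb Z}$ at $\eilm{\mb F_p}$ yields $\mb Z_p$ concentrated in degrees $0$ and $1$, matching the computation for $\mb Z[\mu_{p^\infty}]$.

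\emph{Inductive step.} Any finitely generated nilpotent infinite $\mathcal{G}$ admits a central subgroup $Z \cong \mb Z$ with quotient $\mathcal{G}'$ of smaller Hirsch length, and pro-$p$ completion carries $1 \to Z \to \mathcal{G} \to \mathcal{G}' \to 1$ to $1 \to \mb Z_p \to \compl{\mathcal{G}}_p \to \compl{\mathcal{G}'}_p \to 1$. Decomposing representations of $\mathcal{G}$ according to their restriction to $Z$ gives an expression of $R[\mathcal{G}]$ as a twisted bar-type construction over $R[Z]$ with the $\mathcal{G}'$--action on $\Irr(Z)$; the analogous decomposition holds for $R[\compl{\mathcal{G}}_p]$ over $R[\mb Z_p]$. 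The map of equation~\ref{eq:mainmap} respects these decompositions, producing a map of appropriate geometric realizations whose building blocks are equivalences by the inductive hypothesis (applied to $\mathcal{G}'$) together with the base case (applied to $Z$). Because the derived completion at $\eilm{\mb F_p}$ commutes with these bar-construction colimits under the convergence hypotheses secured by finite generation, the equivalence descends to the completions.

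\emph{Main obstacle.} The hard part will be making the "twisted product over $R[Z]$'' description rigorous at the spectrum level and checking that it is preserved by the map $R[\compl{\mathcal{G}}_p] \to R[\mathcal{G}]$ in a way compatible with the topology of $\Irr(\mathcal{G})$ (which arises as a quotient of $\Irr(Z) = S^1$ by the $\mathcal{G}'$--action, modulated by the cocycle of the central extension). A related subtlety is convergence: since the spectral sequence of equation~\ref{eq:adamsss} need not converge strongly in general, one must exploit the pro-nilpotent / Noetherian control afforded by finite generation of $\mathcal{G}$ to ensure that the relevant towers of partial totalizations are pro-equivalent, so that the inductive comparison on each cobar stage assembles into an equivalence of completions.
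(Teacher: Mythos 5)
The decisive gap is in your inductive step. For a central extension $1 \to Z \to {\cal G} \to {\cal G}' \to 1$ the conjugation action of ${\cal G}'$ on the central subgroup $Z$ is trivial, so there is no meaningful ``${\cal G}'$--action on $\Irr(Z)$'', and $\Irr({\cal G})$ is certainly not a quotient of $\Irr(Z) \cong S^1$: by Schur's lemma an irreducible representation determines a central character, and the fiber over a character $\chi$ consists of irreducible \emph{projective} representations of ${\cal G}'$ twisted by the cocycle attached to $\chi$ and the extension. Already for the Heisenberg group, \fullref{prop:Hrepring} shows that $\Irr(H)$ is a disjoint union over $\zeta \in \mu_\infty$ (only the torsion central characters occur) of two--dimensional tori $(S^1 \times S^1)/(\langle \zeta\rangle \times \langle\zeta\rangle)$ --- neither a quotient of $S^1$ nor anything built from the honest representation ring of ${\cal G}' \cong \mb Z^2$ by a bar construction over $R[Z]$. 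Since the fibers involve twisted representation rings of ${\cal G}'$, your inductive hypothesis (which concerns $R[{\cal G}']$ itself) does not apply, and the induction does not close. Compounding this, the assertion that the derived completion at $\eilm{\mb F_p}$ ``commutes with these bar-construction colimits'' is exactly the kind of statement that fails in general --- localization and nilpotent completion do not commute with infinite homotopy colimits --- and finite generation of ${\cal G}$ by itself secures nothing here. Finally, in your base case ${\cal G} = \mb Z$, matching the two answers from equation~\ref{eq:circlecase} does not show that the given map induces an equivalence; one must argue about the map itself, as is done in \fullref{sec:groupZ} by writing $R[\hat{\mb Z}] \simeq \hocolim \eilm{\mb Z}[\mu_m]$ and comparing explicit cofiber sequences.

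For contrast, the paper never inducts on Hirsch length and never passes a completion through a colimit. It proves the single smash-product statement that $\eilm{\mb F_p} \to R[{\cal G}] \smsh{R[\compl{{\cal G}}_p]} \eilm{\mb F_p}$ is an equivalence (the multiplicative $\eilm{\mb F_p}$--equivalence of \fullref{def:multequiv}, \fullref{thm:main2}); this implies the map is an ordinary $\eilm{\mb F_p}$--equivalence of modules, whence \fullref{lem:boustocomp} (with \fullref{prop:compiso} handling the change of base ring) gives the equivalence of derived completions with no convergence issues. The nilpotent case is reduced to the abelian one not via central series but via the Lubotzky--Magid structure theorem (\fullref{thm:irrspace}): every irreducible factors through a finite quotient and $\Irr({\cal G})$ is a union of ${\cal G}^*$--orbits with finite stabilizers, giving $\Irr({\cal G}) \cong {\cal G}^* \times_{(\compl{{\cal G}})^*} \Irr(\compl{{\cal G}})$ (\fullref{cor:fiberprod}) and a comparison over the base $k = R[(\compl{{\cal G}})^*]$, supplemented by the $\mb Z$ computation and the prime-to-$p$ vanishing of \fullref{sec:nonl}. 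To rescue your outline you would need both a genuine spectrum-level treatment of the twisted (projective) representation rings appearing in the fibers and a proof that completion can be passed through your realization; neither is supplied.
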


The proof requires some intermediate results.  We first make the
following definition.

\begin{defn}
\label{def:multequiv}
Suppose $E$ is a fixed commutative connective $\mb S$--algebra.  A map
$A \to A'$ of commutative $\mb S$--algebras over $E$ is a multiplicative
$E$--equivalence if the map $E \to A' \smsh{A} E$ is a equivalence.
\end{defn}

In \fullref{sec:completions} we will recall some definitions and
properties of Bousfield localization.  A multiplicative
$E$--equivalence $A \to A'$ is also an ordinary $E$--equivalence of
$A$--modules, and hence $A$ and $A'$ have the same $E$--localizations as
$A$--modules.  More is true, however; by
\fullref{prop:compiso}, the $E$--localization of $A$ as an
$A$--module is homotopy equivalent to the $E$--localization of $A'$ as an
$A'$--module.

The $E$--localization $\loca{A}{E}{A}$ of $A$ and the derived
completion $\compl{A}_E$ are related by a map $\loca{A}{E}{A} \to
\compl{A}_E$, where the localization is viewed as a constant
pro-object.  The $E$--localization of $A$ is equivalent to the
homotopy limit $\holim(\compl{A}_E)$ under good circumstances, such as
when smashing with $E$ can be passed under the limit.

\begin{thm}
\label{thm:main2}
Suppose ${\cal G}$ is a finitely generated discrete group.
The map $\eilm{R[\compl{{\cal G}}_p]} \to \eilm{\mb Z} \smsh{}
\Sigma^\infty_+ (\Irr({\cal G}))$ is a multiplicative $\eilm{\mb
F_p}$--equivalence.
\end{thm}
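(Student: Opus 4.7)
The plan is to unwind \fullref{def:multequiv} and translate the claim into a derived tensor product computation in chain complexes, then to factor the map through the deformation representation ring $R[{\cal G}]$ so that we can apply the earlier results of the paper.

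Setting $A = \eilm{R[\compl{{\cal G}}_p]}$, $A' = \eilm{\mb Z} \smsh{} \Sigma^\infty_+ \Irr({\cal G})$, and $E = \eilm{\mb F_p}$, the statement is that the unit $E \to A' \smsh{A} E$ is a weak equivalence. Since all of these spectra are $\eilm{\mb Z}$-modules, the Eilenberg--MacLane equivalence \cite[Theorem~IV.2.4]{ekmm97} lets us translate this into a statement in the derived category of chain complexes of abelian groups, namely that
\[
C_*(\Irr({\cal G}); \mb Z) \otimes^{L}_{R[\compl{{\cal G}}_p]} \mb F_p \simeq \mb F_p,
\]
where the $R[\compl{{\cal G}}_p]$-module structure on the left comes from the map of equation~\ref{eq:mainmap} (which factors through $R[{\cal G}]$ as in the proof of \fullref{prop:mainmap}), and the $R[\compl{{\cal G}}_p]$-module structure on the right is the augmentation sending each irreducible to $1$ followed by reduction mod~$p$.

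The next step is to exploit that $\compl{{\cal G}}_p$ is pro-$p$: the augmentation ideal $I \subset R[\compl{{\cal G}}_p] \otimes \mb F_p$ is the unique maximal ideal, so this ring is local with residue field $\mb F_p$. Hence the derived tensor product above is controlled by the local behavior of $C_*(\Irr({\cal G}); \mb Z)$ at $I$. Because ${\cal G}$ is finitely generated, one can filter ${\cal G}$ by a central series (using nilpotence where available) or along its generators, and reduce to elementary cases. The base case ${\cal G} = \mb Z$ is the explicit calculation behind equation~\ref{eq:circlecase}: here $A = \eilm{\mb Z[\mu_{p^\infty}]}$, $\Irr(\mb Z) = S^1$, and one verifies directly that $(\eilm{\mb Z} \smsh{} \Sigma^\infty_+ S^1) \smsh{A} \eilm{\mb F_p} \simeq \eilm{\mb F_p}$. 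The inductive step should use that multiplicative $E$-equivalences are preserved under base change and smash products, applied to each central or cyclic quotient.

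The main obstacle is propagating the multiplicative $\eilm{\mb F_p}$-equivalence cleanly through the inductive reduction. Although a multiplicative $E$-equivalence is well-behaved under base change, we must verify that the $\mb S$-algebra structure on $R[{\cal G}]$ constructed in \fullref{sec:repringspectra} is compatible with the chosen filtration of ${\cal G}$, so that at each step both $R[\compl{{\cal G}}_p]$ and $R[{\cal G}]$ split as appropriate smash products. A further subtlety is that $C_*(\Irr({\cal G}); \mb Z)$ has nontrivial homology in positive degrees, and we need the $R[\compl{{\cal G}}_p]$-action on these higher homology groups to interact with the augmentation precisely so that the derived tensor product collapses to $\mb F_p$ in degree zero---it is exactly this cancellation, rather than a vanishing of higher $\Tor$, that makes the argument work.
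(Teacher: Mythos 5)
Your plan correctly identifies the base case ${\cal G} = \mb Z$ and the general idea of reducing a nilpotent group to abelian pieces, but the proposed reduction mechanism --- filtering ${\cal G}$ by a central series and inducting up --- is not the paper's and has a genuine gap. Representation theory does not interact simply with a central series: $\Irr({\cal G})$ is not assembled from $\Irr$ of the subquotients by any obvious smash or base-change formula, so the assertion that ``both $R[\compl{{\cal G}}_p]$ and $R[{\cal G}]$ split as appropriate smash products'' at each central step is unsupported and, as far as one can tell, false. The paper instead factors the map as $R[\compl{{\cal G}}_p] \to R[\compl{{\cal G}}] \to R[{\cal G}]$ and handles the two steps by completely different mechanisms. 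For the second step it uses Lubotzky--Magid's Theorem~\ref{thm:irrspace} and its consequence Corollary~\ref{cor:fiberprod}, which give the precise structural identity
\[
\Irr({\cal G}) \cong {\cal G}^* \mathop{\times}_{(\compl{{\cal G}})^*} \Irr(\compl{{\cal G}}),
\]
and then applies Proposition~\ref{prop:compiso} with the base ring $k = R[(\compl{{\cal G}})^*]$ to reduce the whole nilpotent case to the abelian case in a single step --- there is no filtration. The abelian case in turn uses the decomposition ${\cal G} \cong \mb Z^d \oplus A$ together with Lemma~\ref{lem:groupprod} (stability of multiplicative equivalences under $- \times H$), not a central series.

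Your proposal also omits the first factor $R[\compl{{\cal G}}_p] \to R[\compl{{\cal G}}]$ entirely. This is where the paper uses a separate argument (Section~\ref{sec:nonl}): a finitely generated nilpotent profinite group splits canonically as $\compl{{\cal G}}_\ell \times H$ with $H$ an inverse limit of prime-to-$\ell$ groups, and for such $H$ the representation ring becomes $\eilm{\mb F_\ell}$-equivalent to $\eilm{\mb Z}$ by a semisimplicity argument over $\bar{\mb F}_\ell$. Your observation that $R[\compl{{\cal G}}_p] \otimes \mb F_p$ is local is plausible for pro-$p$ groups, but it is not actually used; the heavy lifting is done by the explicit $\mb Z$ computation, Lubotzky--Magid, and the prime-to-$\ell$ semisimplicity, none of which appear in your sketch. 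Finally, the remark that ``cancellation rather than vanishing of higher $\Tor$'' drives the argument slightly misdescribes the situation: the multiplicative equivalence condition concerns only $E \to A' \smsh{A} E$, and in the paper's proof this is verified by showing that the higher $\Tor$ over the relevant base rings genuinely vanishes after tensoring with $\eilm{\mb F_\ell}$, not by an interplay of nonzero contributions.
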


\begin{proof}
By \fullref{prop:repdga} and
\fullref{cor:finiterepring}, this statement is equivalent to
\fullref{prop:main2}.
\end{proof}

The results leading to \fullref{thm:main2} are proved by making use
of the structure of these $\mb S$--algebras as algebra spectra over
$R[\compl{{\cal G}}_{\rm ab}]$, the representation ring of the
abelianization of the profinite completion of ${\cal G}$.

\fullref{thm:main} then follows from \fullref{thm:main2} and
\fullref{lem:boustocomp}, which proves that multiplicative
$E$--equivalences induce equivalences of derived completions.  In fact,
the groups listed in equation~\ref{eq:heiscase} will be proven in
\fullref{sec:heiscomp} to be the homotopy groups of the $\eilm{\mb
F_p}$--localization of $R[\compl{H}_p]$.

We now briefly outline the stages of the argument.  In
\fullref{sec:completions} we will show that multiplicative
$E$--equivalences are closed under composition in the category of $\mb
S$--algebras over $E$. Additionally, we show in
\fullref{prop:compiso} that if $R$ is a commutative $\mb
S$--algebra, and $A \to A'$ is a map of commutative $R$--algebras over
$E$ which is an $E$--equivalence as $R$--modules, then this map is also
a multiplicative $E$--equivalence.

In \fullref{sec:nilrep} we recall and apply results of Lubotzky--Magid
\cite{lubotzkymagid85} about the spaces of representations associated
to nilpotent groups.  \fullref{sec:repringspectra} models the map of
equation~\ref{eq:mainmap} by a naturally defined map of ring objects
in $\Gamma$--spaces, obtained by considering spaces of representations.
The proof of \fullref{thm:main} is the main content of
\fullref{sec:compiso}.

The $\mb S$--algebra $\eilm{\mb Z} \smsh{} \Sigma^\infty_+ (\Irr({\cal
G}))$ is more tractable.  In \fullref{sec:heiscomp}, this spectrum
is used to compute the homotopy groups of the Bousfield localization
of the representation ring of $\compl{H}_p$, the $p$--adic Heisenberg
group.  The results agree with the computation referenced in
equation~\ref{eq:heiscase}, with the additional benefit that the
algebra structure becomes apparent.  The completion of this
computation appears in \fullref{sec:heisdone}: the homology of the
Bousfield localization of $R[\compl{H}_p]$ is a retract of the
$p$--completion of $H_*(\Irr(H))$, the homology of the space of
irreducible representations of the integral Heisenberg group.  The
retract contains precisely those summands of the homology
corresponding to components that contain pro-$p$ representations.
Finally, in \fullref{sec:heisnilp}, these groups are shown to
coincide with the homology groups of the derived completion
$\compl{R[\compl{H}_p]}_{\eilm{\mb F_p}}$.

\medskip
{\bf Acknowledgements}\qua  The author would like to thank Daniel Bump,
Veronique Godin, Robert Guralnick, Haynes Miller, and Daniel Ramras
for helpful discussions related to this paper.  The majority of this
research was carried out while the author was a student of Gunnar
Carlsson, and the present work owes much to him.  The author was
partially supported by NSF award 0402950 and the ARCS foundation.

\section{Localizations of module spectra}
\label{sec:completions}

Throughout this section, we will be working in the category ${\cal
  M}_R$ of module spectra over a fixed commutative $\mb S$--algebra
  $R$, and $E$ will denote an element of ${\cal M}_R$.  (In the
  applications, $E$ will be a ``quotient'' $R$--algebra with unit $\eta
  \co R \to E$.)  See Elmendorf--Kriz--Mandell--May \cite{ekmm97} for
  foundational material on these categories of spectra.  Our real
  interest lies in the homotopy category ${\mathcal D}_R$, which is
  closed symmetric monoidal under $\smsh{R}$ and $F_R(-,-)$, and so we
  will usually replace $R$--modules by cell $R$--modules without
  comment.  The book Hovey--Palmieri--Strickland \cite{hps97} serves as
  a good introduction to the abstract theory of these homotopy
  categories.

We will write $M_*$, $M_*(N)$, and $\mapset{}{M}{N}^*$ for the groups
$\pi_*(M)$, $\pi_*(M \smsh{R} N)$, and $\pi_{-*}(F_R(N,M))$
respectively, where $F_R(N,M)$ is the $R$--module function spectrum.
(The underlying $\mb S$--algebra $R$ will be specified if it is
ambiguous.)

In the setting of ring spectra, one appropriate replacement for the
notion of ``completion at an ideal'' is Bousfield localization.  The
$E$--localization of an $R$--module $M$ is obtained from $M$ by throwing
away all information that is not determined by the homology theory
$E_*(-)$.

We begin by recalling some definitions.

\begin{defn}
An $R$--module $X$ is $E$--acyclic if $E \smsh{R} X \simeq *$.
\end{defn}
\begin{defn}
A map $f\co M \to N$ of $R$--modules is an $E$--equivalence if the cofiber
is $E$--acyclic.
\end{defn}

\begin{defn}
An object $M$ is $E$--local if $\mapset{R}{X}{M}^* = 0$ whenever $X$ is
$E$--acyclic.  ($E$--acyclic objects are closed under suspension, so
this is equivalent to requiring that $\mapset{R}{X}{M} = 0$ for all
$E$--acyclic $X$.)
\end{defn}

\begin{rmk}
The definition immediately implies that $E$--local objects form a
thick subcategory of ${\mathcal D}_R$ closed under homotopy limits.
If $E$ is an $R$--algebra, then the adjunction $\mapset{E}{E \smsh{R}
X}{M} \cong \mapset{R}{X}{M}$ implies that all $E$--module spectra
$M$ are $E$--local.
\end{rmk}

\begin{defn}
An $E$--localization of $M$ is an $E$--local object $\loca{}{E}{M}$
equipped with an $E$--equivalence $M \to \loca{}{E}{M}$.  If the
underlying $\mb S$--algebra $R$ is ambiguous, we write $\loca{R}{E}{M}$.
\end{defn}

The definition implies that any map from $M$ to a local object $N$
factors uniquely (up to homotopy) through $\loca{}{E}{M}$, and hence
any such localization is unique up to equivalence.  Therefore, an
$E$--equivalence $M \to M'$ gives rise to an equivalence
$\loca{}{E}{M} \to \loca{}{E}{M'}$.

For details about the existence of localizations, consult
\cite[Chapter~VIII]{ekmm97}.  A model structure exists on the category
of $R$--modules that has $E$--equivalences as weak equivalences, and the
fibrant objects in this model structure become $E$--local in the
homotopy category of $R$--modules.  For our purposes, however, only
existence is important.

We will now list some consequences about maps of $\mb S$--algebras and
their behavior under localization.

\begin{lem}
\label{lem:extcomp}
Suppose $R \to R'$ is a map of commutative $\mb S$--algebras, and $E$ is an
$R$--module.  Define $F = E \smsh{R} R'$.  Then for any $R'$--module
$M$, the map $M \to \loca{R'}{F}{M}$ is an $E$--localization of $M$ as
an $R$--module.
\end{lem}

\begin{proof}
This is \cite[Proposition~VIII.1.8]{ekmm97}.
\end{proof}

\begin{lem}
\label{lem:extacyclic}
Suppose $R \to R'$ is a map of commutative $\mb S$--algebras and
$E'$ is an $R'$--module.  If $M$ is an $R'$--module that is $E'$--acyclic
as an $R$--module, then $M$ is $E'$--acyclic as an $R'$--module.
\end{lem}

\begin{proof}
Without loss of generality, assume $R$ is a $q$--cofibrant commutative
$\mb S$--algebra, $R'$ is a commutative $q$--cofibrant $R$--algebra, and
$E'$ is a cell $R'$--module.  By \cite[Proposition~IX.2.3]{ekmm97}, $E'
\smsh{R'} M$ is equivalent to $B^R(E',R',M)$, the geometric
realization of the simplicial spectrum
\[
B^R_p(E',R',M) = E' \smsh{R} (R')^{\smsh{R} (p)} \smsh{R} M.
\]
The symmetric monoidal properties of the smash product in ${\cal M}_R$
imply that\break $B^R_p(E',R',M) \simeq *$ because $E' \smsh{R} M \simeq *$.
Therefore, $E' \smsh{R'} M \simeq *$.
\end{proof}

\begin{prop}
\label{prop:compiso}
Suppose there is a diagram $R \to A \to A'$ of maps of commutative
$\mb S$--algebras, and $E$ is an $A'$--module.  There is a natural (up
to homotopy) map $\loca{A}{E}{A} \to \loca{A'}{E}{A'}$.  If $A \to A'$
is an $E$--equivalence as $R$--modules, then this map is an equivalence.
\end{prop}

\begin{proof}
The object $\loca{A'}{E}{A'}$ is $E$--local as an $A$--module; if $X$ is
an $E$--acylic $A$--module, $A' \smsh{A} X$ is an $E$--acyclic
$A'$--module, and hence
\[
\mapset{A}{X}{\loca{A'}{E}{A'}} \cong \mapset{A'}{A' \smsh{A} X}
{\loca{A'}{E}{A'}} = 0.
\]
Therefore, the composite map $A \to A' \to \loca{A'}{E}{A'}$ factors
through $\loca{A}{E}{A}$.

Suppose the map $A \to A'$ is an $E$--equivalence of $R$--modules, or
equivalently the cofiber is $E$--acyclic as an $R$--module.  By
\fullref{lem:extacyclic}, the cofiber is $E$--acyclic as an
$A$--module, so without loss of generality we can assume $R = A$.

Define $F = E \smsh{A} A'$.  Because the map $A \to A'$ is an
$E$--equivalence, the maps
\[
E \smsh{A} A \to E \smsh{A} A' \to E
\]
are all equivalences, where the right-hand map is the
multiplication map for $A'$--modules.  This is an $A'$--module
map, so $F \simeq E$ as an $A'$--module.  Thus
\[
\loca{A}{E}{A} \simeq \loca{A}{E}{A'} \simeq \loca{A'}{F}{A'} \simeq
\loca{A'}{E}{A'},
\]
by \fullref{lem:extcomp}.
\end{proof}

\begin{lem}
\label{lem:locawequiv}
The collection of commutative $\mb S$--algebras over $E$, with
multiplicative $E$--equivalences as maps, forms a category.
\end{lem}

\begin{proof}
Suppose $A'' \to A \to A'$ are both multiplicative $E$-equivalences.
Consider the following diagram of maps.
$$\xymatrix{
E \ar[r] \ar[dr] &
(E \smsh{A''} A) \smsh{A} A' \ar[r] \ar[d] &
E \smsh{A''} A' \\
& E \smsh{A} A'
}$$
The lower maps are equivalences by assumption, while the rightmost map
is an isomorphism by associativity of the smash product.
\end{proof}

Let $R$ be a commutative $\mb S$--algebra and $E$ an $R$--algebra.  In
\cite{bousfield79}, the $E$--nilpotent completion (or derived
completion) of an ordinary spectrum was defined; we will briefly
recall this definition in the context of modules over $R$.

\begin{defn}
The family of $E$--nilpotent objects is the thick subcategory of the
category of $R$--modules generated by $E$--modules, i.e. the smallest
full subcategory containing the $E$--modules closed under retracts
and exact triangles.
\end{defn}

\begin{defn}
An $E$--nilpotent completion of an $R$--module $M$ is an inverse system 
$\{W_s\}_{s \geq 0}$ of $E$--nilpotent objects under $M$ such that
the map
\[
\lim \mapset{R}{W_s}{X} \to \mapset{R}{M}{X}
\]
is an isomorphism for any $E$--nilpotent object $X$.  (The
$E$--nilpotent completion of $M$ is defined up to pro-equivalence.)
\end{defn}

\begin{rmk}
Note that in order to check that an object is an $E$--nilpotent
completion of $M$, it suffices to check the isomorphism 
$\lim \mapset{R}{W_s}{X} \to \mapset{R}{M}{X}$ when $X$ is an
$E$--module, as both sides preserve retracts and exact triangles.
\end{rmk}

\begin{rmk}
$E$--nilpotent completions exist; they can be explicitly constructed
by the following standard procedure.  Let $I$ be the fiber of the
map $R \to E$, and $I^{(s)}$ its $s$--fold smash power over $R$.  Let
$R/I^{(s)}$ be the cofiber of the map $I^{(s)} \to R^{(s)} \cong R$.
One finds that there are cofiber
sequences
\[
E \smsh{R} I^{(s)} \smsh{R} M \to R/I^{(s+1)} \smsh{R} M \to
R/I^{(s)} \smsh{R} M
\]
for all $s \geq 0$.
By induction, we find that the $R/I^{(s)} \smsh{R} M$ are
$E$--nilpotent.  In addition, if $X$ is an $E$--module, the adjunction
\[
\mapset{R}{I^{(s)} \smsh{R} M}{X} \cong
\mapset{E}{E \smsh{R} I^{(s)} \smsh{R} M}{X}
\]
shows that any map in $\mapset{R}{R/I^{(s)}\smsh{R} M}{X}$ that goes to
zero in $\mapset{R}{R}{M}$ factors through $\mapset{R}{\Sigma E \smsh{R}
I^{(s)} \smsh{R} M}{X}$, which goes to zero in
$\mapset{R}{R/I^{(s+1)} \smsh{R} M}{X}$.  Additionally, the map
$\mapset{R}{R/I}{X} \to \mapset{R}{R}{X}$ is already surjective by the
same adjunction.
\end{rmk}

\begin{lem}
\label{lem:boustocomp}
If $M \to N$ is an $E$--equivalence of $R$--modules, then the map of
derived completions $\compl{M}_E \simeq \compl{N}_E$ is an equivalence
of pro-objects.
\end{lem}

\begin{proof}
Let $\{W_s\}$ be an $E$--nilpotent completion of $N$.
If $X$ is an $E$--module, then $X$ is $E$--local, so the composite map
\[
\lim \mapset{R}{W_s}{X} \to \mapset{R}{N}{X} \to \mapset{R}{M}{X}
\]
is an isomorphism.
\end{proof}

\section{Representations of nilpotent groups}
\label{sec:nilrep}
Let ${\cal G}$ be a topological group, and let $\Hom({\cal
G},\Uni(n))$ be the space of (continuous) group homomorphisms from
${\cal G}$ to $\Uni(n)$, considered as a subspace of the mapping space
$\Map({\cal G},\Uni(n))$.  The latter space is given the compact--open
topology.

\begin{exam}[Finitely generated discrete groups]
Suppose ${\cal G}$ is a finitely generated discrete group, with presentation
\[
{\cal G} = \left\langle g_1, g_2, \ldots \mid  r_1(g_i) = r_2(g_i) = \ldots = 1
\right\rangle.
\]
Then $\Hom({\cal G},\Uni(n))$ is the set of real points of an
algebraic variety, as follows.  Writing a general complex matrix in
the form $A = (a_{jk} + b_{jk}i)_{jk}$, we can expand out the formula
$A A^* = I$.  We find that the points of $\Uni(n)$ are the real points
of the algebraic variety
\[
\left\{(a_{jk},b_{jk}) \ \Big|\ \sum_\ell a_{j\ell}a_{k\ell} + b_{j\ell}b_{k\ell} =
\delta_{jk}, \sum_\ell b_{j\ell}a_{k\ell} - a_{j\ell}b_{k\ell} = 0 \right\}.
\]
The space $\Hom({\cal G}, \Uni(n))$ can be written as the set
\[
\left\{(A_1, A_2, \ldots) \in \prod \Uni(n) \Big| r_1(A_i)
  = r_2(A_i) = \ldots = I \right\},
\]
and the mapping topology coincides with the subspace topology from
$\prod \Uni(n)$.  (Pointwise convergence of homomorphisms is the same
as pointwise convergence on the generating set.)  The conditions
$r_j(A_i) = I$ introduce extra equations that the points must
satisfy, but these are still algebraic.  Therefore, the space
$\Hom({\cal G}, \Uni(n))$ is a closed subvariety of $\prod \Uni(n)$.
\end{exam}

\begin{exam}[Profinite groups]
\label{exam:profinite}
First, we will prove that there exists a sufficiently small open
neighborhood ${\cal U}$ of the identity in $\Uni(n)$ that contains no
nontrivial subgroups.  Let ${\cal V}$ be a ball centered at the
identity of the Lie algebra $\mf u(n)$ such that $\exp \co \mf u(n)
\to \Uni(n)$ is injective on $2{\cal V}$, and let ${\cal U} =
\exp({\cal V})$.  For any $I \neq g \in {\cal U}$, $g = \exp(A)$ for
some $A \in {\cal V}$.  $A \neq 0$, so there exists a unique
nonnegative integer $n$ such that $2^{n-1} A \in {\cal V}$ but
$2^{n}A \not \in {\cal V}$.  Then, because $2^n A \in 2{\cal V}$ where
the exponential map is injective, we find that $g^{2^n} = \exp(2^n A)$
is not in ${\cal U}$.

Given any normal open subgroup $N$ of ${\cal G}$, the set $D$
consisting of those homomorphisms mapping the compact set $N$ to the
open set $\mathcal{U}$ is precisely the set of homomorphisms factoring
through ${\cal G}/N$, because any such homomorphism must send $N$ to
the unique subgroup of ${\cal U}$.  This set $D$ is therefore both
open (in the compact--open topology) and closed (it is also the set of
maps sending $N$ to the identity).

Every continuous homomorphism from ${\cal G}$ to $\Uni(n)$ factors
through a finite quotient, as the image must be compact and because of
what was proved above.  As a result, $\Hom({\cal G},\Uni(n))$ is the
colimit of spaces $\Hom({\cal G}/N, \Uni(n))$ as $N$ varies over open
normal subgroups of ${\cal G}$.  Each of these spaces is an open and
closed subset of the colimit.  All of these are representation spaces
of finite groups, and as a result they are compact.
\end{exam}

\begin{rmk}
In both cases, $\Hom({\cal G},\Uni(n))$ is a Hausdorff space
and locally path connected.
\end{rmk}

The space $\Hom({\cal G},\Uni(n))$ has a continuous action of $\Uni(n)$ on it by
conjugation.  Define
\[
\Rep({\cal G}) = \coprod_{n \geq 0} \Hom({\cal G},\Uni(n)) / \Uni(n).
\]
Because $\Uni(n)$ is path connected and compact, in the discrete and
profinite cases $\Rep({\cal G})$ is a Hausdorff locally path
connected space.  However, for a general group ${\cal G}$ there can be
some point-set level pathology in this construction.

$\Rep({\cal G})$ parameterizes isomorphism classes of continuous
unitary representations of ${\cal G}$.  It has a continuous
augmentation map $\varepsilon\co \Rep({\cal G}) \to \mb N$ given by
sending a representation to its dimension.  Write ${\cal G}^*$ for the
preimage of $1$ under this map.

After making a choice of basis, there are continuous maps $\oplus\co
\Uni(n) \times \Uni(m) \to \Uni(n+m)$ and $\otimes\co \Uni(n) \times
\Uni(m) \to \Uni(nm)$ that induce continuous operations on $\Rep({\cal
G})$.  These operations on $\Rep({\cal G})$ are strictly
commutative, associative, and distributive, and the augmentation
$\varepsilon$ takes $\oplus$ to addition and $\otimes$ to
multiplication.

If $f \co {\cal G} \to {\cal G}'$ is a (continuous) group homomorphism,
there is a continuous restriction map $f^* \co \Rep({\cal G}') \to
\Rep({\cal G})$ preserving $\oplus$, $\otimes$, and $\varepsilon$.
$\Rep(-)$ is a contravariant functor to topological semirings, and
$(-)^*$ is a contravariant functor to topological groups.

$\Rep({\cal G})$ contains a subspace $\Irr({\cal G})$ of irreducible
representations.  Tensoring an irreducible representation with a
character yields a new irreducible representation, so $\Irr({\cal G})$
is a ${\cal G}^*$--space.  Because unitary representations have unique
decompositions into irreducible representations, we have the
following:

\begin{prop}
The underlying set of $\Rep({\cal G})$ is the free abelian monoid on
the set $\Irr({\cal G})$.
\end{prop}

\begin{cor}
\label{cor:finiterepspace}
If ${\cal G}$ is finite, $\Rep({\cal G})$ is the free abelian topological
monoid on the finite set $\Irr({\cal G})$.
\end{cor}

The following result, which allows us to examine the structure of
$\Rep({\cal G})$ explicitly for discrete nilpotent groups, first appeared
in \cite[Theorem~6.6]{lubotzkymagid85}.

\begin{thm}
\label{thm:irrspace}
Suppose ${\cal G}$ is discrete, finitely generated, and nilpotent.
There is a set $\{\rho_\alpha\}$ of representations of ${\cal G}$, all
factoring through finite quotients, and finite subgroups $I_\alpha <
{\cal G}^*$ such that
\[
\Irr({\cal G}) \cong \coprod_\alpha ({\cal G}^*/I_\alpha)
\cdot [\rho_\alpha]
\]
as a ${\cal G}^*$--space.  Additionally, for any $n \in \mb N$ there are
only finitely many $\rho_\alpha$ of dimension $n$.
\end{thm}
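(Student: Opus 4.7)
The plan is to reduce the theorem to the following key structural lemma: \emph{every irreducible unitary representation $\rho$ of the finitely generated nilpotent group ${\cal G}$ is isomorphic to $\sigma \otimes \chi$ for some finite-image representation $\sigma$ and some $\chi \in {\cal G}^*$}. Granting this lemma, the ${\cal G}^*$-action on $\Irr({\cal G})$ by tensor product has every orbit containing a finite-image representative $\rho_\alpha$. Setting $I_\alpha = \{\chi \in {\cal G}^* : \rho_\alpha \otimes \chi \cong \rho_\alpha\}$ and using that ${\cal G}^* = \widehat{{\cal G}^{ab}}$ is compact Hausdorff, the continuous equivariant orbit map yields a homeomorphism ${\cal G}^*/I_\alpha \to {\cal G}^* \cdot [\rho_\alpha]$; combined with the finiteness of the number of dimension-$n$ orbits (addressed below), this gives the coproduct decomposition.

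I would prove the key lemma by induction on the Hirsch length $h({\cal G})$. The base case $h = 0$ is trivial since ${\cal G}$ is then finite. For $h > 0$, the center $Z = Z({\cal G})$ has positive Hirsch length (as ${\cal G}$ is infinite and nilpotent), and by Schur's lemma $\rho|_Z$ is multiplication by a character $\omega \co Z \to \Uni(1)$. The crucial commutator-torsion observation is that $\omega|_{Z \cap [{\cal G},{\cal G}]}$ has order dividing $n = \dim \rho$: for $z \in Z \cap [{\cal G},{\cal G}]$ the scalar matrix $\rho(z)$ is a product of commutators in $\Uni(n)$, so $\omega(z)^n = \det \rho(z) = 1$. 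Writing $\phi \co Z \to {\cal G}^{ab}$ for the natural map, the character $\omega^n$ therefore descends to a character $\overline{\omega^n}$ on the image $\phi(Z)$; divisibility of $\Uni(1)$ lets me take an $n$th root in $\widehat{\phi(Z)}$ and extend it to a character $\chi_0 \in {\cal G}^*$, so that $\rho \otimes \chi_0^{-1}$ has central character of order dividing $n$. The kernel of this new central character is a finite-index central subgroup $K$ of ${\cal G}$; the twisted representation factors through ${\cal G}/K$, which satisfies $h({\cal G}/K) = h({\cal G}) - h(Z) < h({\cal G})$, and the inductive hypothesis finishes the argument.

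Finiteness of $I_\alpha$ is immediate from the determinant: $\rho_\alpha \otimes \chi \cong \rho_\alpha$ implies $\chi^n = 1$ for $n = \dim \rho_\alpha$, so $\chi$ factors through the finite group ${\cal G}^{ab}/n{\cal G}^{ab}$. For the finiteness of the number of dimension-$n$ orbits, I would use that any finite-image irreducible representation has image in $\Uni(n)$ equal to a finite nilpotent group with cyclic center (injecting into the scalars by Schur) and with quotient by its center of order at most $n^2$ (since it acts projectively faithfully on $\mb C^n$); combined with finiteness of ${\cal G}^{ab}/n{\cal G}^{ab}$, this bounds the number of orbits.

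The hardest part of the argument will be the twisting step in the inductive proof of the key lemma, where nilpotency is essential: the commutator-torsion relation is what forces $\omega|_{Z \cap [{\cal G},{\cal G}]}$ to be torsion, and without it there would be no way to absorb the non-torsion part of the central character into a twisting character coming from ${\cal G}^*$. The algebraic bookkeeping around the short exact sequence $0 \to \widehat{\phi(Z)} \to \hat Z \to \widehat{Z \cap [{\cal G},{\cal G}]} \to 0$ and the splitting of torsion via divisibility is the technical heart of the proof.
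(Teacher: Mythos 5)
First, a point of reference: the paper does not prove this statement at all --- it quotes it from Lubotzky--Magid \cite[Theorem~6.6]{lubotzkymagid85} --- so there is no internal proof to compare against. Your reduction and your key lemma are the right idea, and the inductive twisting argument (Schur's lemma on the center, the determinant trick showing $\omega^n$ kills $Z\cap[{\cal G},{\cal G}]$, twisting by a character extended from the image of $Z$ in ${\cal G}^{\rm ab}$, and induction on Hirsch length) is essentially sound. Two small repairs are needed there: the character group of $\phi(Z)$ is \emph{not} divisible when $\phi(Z)$ has torsion, so you cannot literally take an $n$th root in $\widehat{\phi(Z)}$; you only need an $n$th root modulo finite order, which exists because the obstruction lives in the finite torsion part (and then the twisted central character has order dividing $n$ times the exponent of the torsion of ${\cal G}^{\rm ab}$, which is all the induction needs). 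Also $K$ has finite index in $Z$, not in ${\cal G}$, but that is exactly what makes $h({\cal G}/K)<h({\cal G})$, so the induction closes.

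The genuine gap is the finiteness of the number of dimension-$n$ orbits. Your bound ``$|\Gamma/Z(\Gamma)|\le n^2$ since $\Gamma$ acts projectively faithfully on $\mb C^n$'' is false, and in fact the standard inequality goes the other way: for a faithful irreducible character of degree $n$ one has $n^2\le[\Gamma:Z(\Gamma)]$, with equality only for groups of central type. Concretely, the dihedral $2$--groups of order $2^m$ are nilpotent, have faithful irreducible $2$--dimensional unitary representations for every $m\ge 3$, and have $[\Gamma:Z(\Gamma)]=2^{m-1}$ unbounded; so there is no bound on the image of an $n$--dimensional finite-image irreducible in terms of $n$ alone, and the appeal to ${\cal G}^{\rm ab}/n{\cal G}^{\rm ab}$ does not explain how the fixed group ${\cal G}$ enters. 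Any correct argument must use ${\cal G}$ itself (its bounded nilpotency class and finite generation). One way to repair it within your framework is to run the finiteness by the same induction on Hirsch length as the key lemma: after twisting, the central character has order dividing $n\cdot e$ (with $e$ the exponent of the torsion of ${\cal G}^{\rm ab}$), so every dimension-$n$ irreducible is, up to twist, pulled back from one of the finitely many quotients ${\cal G}/K$ with $K\le Z$ of index dividing $n e$, each of smaller Hirsch length; the base case is a finite group. Alternatively, Lubotzky--Magid obtain finiteness from the representation variety: $\Hom({\cal G},\Uni(n))$ is a compact real algebraic set with finitely many connected components, and a cohomological argument shows the twist orbits are open in the irreducible locus. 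As it stands, the finiteness assertion --- which is part of the theorem and is also what makes your coproduct decomposition topologically valid --- is not established by your sketch.
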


\begin{rmk}
\label{rmk:repspace}
The group ${\cal G}^*$ is a closed subgroup of $\prod \Uni(1)$, and
hence the spaces ${\cal G}^*/I_\alpha$ are finite unions of
(possibly zero--dimensional) torii.  Additionally, for any
$\rho_\alpha$ the subgroup $I_\alpha$ consists of those characters
$\psi$ such that $\psi \otimes_{\mb C} \rho_\alpha \cong \rho_\alpha$.  This
is necessarily finite: $\rho_\alpha$ factors through a finite
quotient ${\cal G}/N$, so any such $\psi$ must also factor through
${\cal G}/N$.  Therefore, $\psi$ can only range over the finite group of
characters of ${\cal G}/N$.
\end{rmk}

\begin{cor}
\label{cor:fiberprod}
If ${\cal G}$ is discrete, finitely generated, and nilpotent, there is
an isomorphism of ${\cal G}^*$--spaces
\[
\Irr({\cal G}) \cong {\cal G}^* \times_{({\compl{\cal G}})^*}
\Irr({\compl{\cal G}}),
\]
where $\compl{{\cal G}}$ is the profinite completion
\[
\lim_{[{\cal G}:N] < \infty} {\cal G}/N.
\]
\end{cor}

\begin{proof}
As in \fullref{thm:irrspace}, first write
\[
\Irr({\cal G}) \cong \coprod_\alpha {\cal G}^*/I_\alpha [\rho_\alpha],
\]
where the $\rho_\alpha$ factor through finite quotients of ${\cal
G}$.  Therefore, we can assume that the $\rho_\alpha$ are actually
elements of $\Irr({\compl{\cal G}})$.  If $\rho_\alpha$ factors through
a finite quotient of ${\cal G}$ and $\psi$ is a character of ${\cal
G}$ such that $\psi \otimes \rho_\alpha \cong \rho_\alpha$, $\psi$
itself must factor through a finite quotient of ${\cal G}$.
Therefore, the isotropy subgroup $I_\alpha$ of $\rho_\alpha$ is
actually a subgroup of $(\compl{\cal G})^*$.

Tensoring with characters of $\compl{{\cal G}}$ gives a map of
$({\compl{\cal G}})^*$--sets
\begin{equation}
\label{eq:mapequiv}
\coprod_\alpha ({\compl{\cal G}})^*/I_\alpha \cdot [\rho_\alpha] \to
\Irr({\compl{\cal G}}).
\end{equation}
We claim that this map is an isomorphism, which proves the corollary.
It is injective: composing with the map out to
$\Irr({\cal G})$ gives an injection
\[
\coprod (\compl{{\cal G}})^*/I_\alpha \to \coprod {\cal G}^*/I_\alpha.
\]
(The map $(\compl{{\cal G}})^* \to {\cal G}^*$ is injective, because a
character is determined by its values on the dense image of ${\cal
G}$.)  We now prove that the map of equation~\ref{eq:mapequiv} is
surjective.

Given an irreducible representation $\rho$ of ${\compl{\cal G}}$, its
restriction to ${\cal G}$ is an irreducible representation $\rho$ of
${\cal G}$ on a vector space $W$.  By the above decomposition of
$\Irr({\cal G})$, $\rho \cong \psi \otimes \rho_\alpha$ for some
$\alpha$: choose such an isomorphism $f \co \mb C \otimes
W_\alpha \to W$.  Consider the ${\cal G}$--vector space $\Hom_{\mb
C}(W_\alpha, W) \cong W_\alpha^* \otimes W$, where the dual and tensor
are both taken over $\mb C$.  ${\cal G}$ acts on $W$ and $W_\alpha$
through finite quotients, so the action of ${\cal G}$ on $\Hom_{\mb
C}(W_\alpha, W)$ also factors through a finite quotient of ${\cal
G}$.  This vector space contains a map $f'$,
defined by $f'(w) = f(1 \otimes w)$.  Then $g$ acts on $f'$ as follows:
\begin{eqnarray*}
(gf')(w) 
&=& g \cdot f'(g^{-1} w) \\
&=& g \cdot f(1 \otimes g^{-1} w) \\
&=& f(g \cdot (1 \otimes g^{-1} w)) \\
&=& f(\psi(g) \otimes w) \\
&=& \psi(g) f'(w).
\end{eqnarray*}
Therefore, the representation $\Hom_{\mb C}(W_\alpha, W$) contains a
1--dimensional subspace $\langle f' \rangle$ isomorphic to the
representation $\psi$, so $\psi$ factors through a finite quotient of
${\cal G}$.
\end{proof}

We are now in a position to prove the main result of this section.
For a set $X$, let $\Sym^k(X) = X^k/\Sigma_k$ be the $k$--fold
symmetric product of $X$, and for a based set $X$ let $\Sym^\infty(X)$
be the infinite symmetric product $\lim_k \Sym^k(X)$, where the limit is
taken by adding additional copies of the basepoint.

\begin{thm}
\label{thm:nilrep}
Suppose ${\cal G}$ is nilpotent, finitely topologically generated, and
either profinite or discrete.  Then $\Rep({\cal G})$, the space of
isomorphism classes of representations of ${\cal G}$, is homeomorphic
to $\Sym^\infty(\Irr({\cal G})_+)$, the free abelian topological
monoid generated by the space $\Irr({\cal G})$ of irreducible
representations of ${\cal G}$.
\end{thm}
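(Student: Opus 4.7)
The plan is to construct a natural continuous monoid map
\[
\phi\co \Sym^\infty(\Irr({\cal G})_+) \to \Rep({\cal G})
\]
by sending $[\rho_1] + \cdots + [\rho_k]$ to $[\rho_1 \oplus \cdots \oplus \rho_k]$. Continuity follows because $\oplus$ is continuous on unitary groups and descends to $\Rep$; the preceding proposition gives that $\phi$ is a bijection of underlying sets. It remains to show $\phi$ is a homeomorphism, and I would handle the profinite and discrete cases separately.

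For the profinite case, \fullref{exam:profinite} realizes $\Hom({\cal G}, \Uni(n))$ as a topological colimit of its open-and-closed subspaces $\Hom({\cal G}/N, \Uni(n))$, indexed by open normal subgroups $N \trianglelefteq {\cal G}$. Passing to $\Uni(n)$-orbits and summing over $n$ gives $\Rep({\cal G}) = \colim_N \Rep({\cal G}/N)$; similarly $\Irr({\cal G}) = \colim_N \Irr({\cal G}/N)$ as discrete spaces. For each finite quotient, \fullref{cor:finiterepspace} identifies $\Rep({\cal G}/N)$ with $\Sym^\infty(\Irr({\cal G}/N)_+)$. Since $\Sym^\infty(-_+)$ commutes with filtered colimits of based inclusions, these isomorphisms assemble into the desired homeomorphism.

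For the discrete case, I would argue dimension by dimension. Because ${\cal G}$ is finitely generated, $\Hom({\cal G}, \Uni(n))$ sits inside a finite product of copies of $\Uni(n)$ as a closed subvariety, hence is compact; the quotient $\Rep_n({\cal G}) = \Hom({\cal G}, \Uni(n))/\Uni(n)$ is therefore compact Hausdorff. On the source side, \fullref{thm:irrspace} yields an isomorphism
\[
\Sym^\infty(\Irr({\cal G})_+) \cong \prod_\alpha \Sym^\infty(({\cal G}^*/I_\alpha)_+)
\]
(restricted product of abelian topological monoids). The dimension-$n$ piece of this source is a finite disjoint union, over tuples $(k_\alpha)$ with $\sum k_\alpha \dim\rho_\alpha = n$, of finite products $\prod_\alpha \Sym^{k_\alpha}({\cal G}^*/I_\alpha)$. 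By \fullref{rmk:repspace} each ${\cal G}^*/I_\alpha$ is a finite disjoint union of tori, so each such product is compact. Restricted to dimension $n$, $\phi$ is therefore a continuous bijection between compact Hausdorff spaces, hence a homeomorphism; taking the disjoint union over $n$ concludes.

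The main obstacle is checking that the dimension-$n$ piece of the source is genuinely compact. This relies on the finiteness of the set $\{\alpha : \dim\rho_\alpha \leq n\}$ provided by \fullref{thm:irrspace}, together with the compactness of each ${\cal G}^*/I_\alpha$ from \fullref{rmk:repspace}. Once this finiteness is in hand, the classical fact that a continuous bijection from compact to Hausdorff is a homeomorphism dispatches the discrete case without further geometric analysis of strata or orbits of the $\Uni(n)$-action.
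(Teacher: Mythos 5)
Your proof is correct and follows essentially the same route as the paper: construct the continuous bijective monoid map, observe from \fullref{thm:irrspace} and \fullref{rmk:repspace} that the fibers of the source over $\mb N$ are compact, and conclude via the Hausdorff property of $\Rep({\cal G})$; the profinite case is handled identically via \fullref{exam:profinite} and \fullref{cor:finiterepspace}. The only cosmetic difference is that you also verify compactness of the target fibers $\Rep_n({\cal G})$, which is true but not needed once the source fibers are known to be compact and the target is Hausdorff.
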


\begin{proof}
If ${\cal G}$ is profinite, then from \fullref{exam:profinite} we
can make the identification $\Rep({\cal G}) = \colim_N \Rep({\cal G}/N)$
as $N$ ranges over open normal subgroups of ${\cal G}$.  The result
follows from \fullref{cor:finiterepspace}.

Now suppose that ${\cal G}$ is discrete.  We have a continuous map of
spaces $\Irr({\cal G}) \to \Rep({\cal G})$ that induces a continuous
map of monoids $\Sym^\infty(\Irr({\cal G})_+) \to \Rep({\cal G})$.
Because the underlying set of $\Sym^\infty(\Irr({\cal G})_+)$ is the
free abelian monoid on $\Irr({\cal G})$, this map is bijective.

We have also seen in \fullref{thm:irrspace} that $\Irr({\cal G})$
is compact in fibers over $\mb N$.  Because this is true for
$\Irr({\cal G})$, it is true for the symmetric products
$\Sym^k(\Irr({\cal G}))$, and hence true for the monoid
$\Sym^\infty(\Irr({\cal G})_+) \cong \coprod \Sym^k(\Irr({\cal G}))$.
The result follows because $\Rep({\cal G})$ is Hausdorff.
\end{proof}

\section{Deformation representation ring spectra}
\label{sec:repringspectra}
Let ${\cal G}$ be a topological group.  From this group ${\cal G}$, we
will now construct a deformation representation ring spectrum $R[{\cal
  G}]$ by analogy with the construction for a finite group.  The
constructions here are a modification of those that yield a
``deformation'' $K$--theory spectrum (see \cite{carlsson:kth}), and the
homotopy groups of $R[{\cal G}]$ will be shown in forthcoming work to
be involved in a spectral sequence that computes the homotopy groups
of deformation $K$--theory.

The space $\Rep({\cal G})$ of \fullref{sec:nilrep} is an abelian
topological monoid under $\oplus$, so we can apply an iterated
classifying space construction to it; the result is a (na\"ive pre-)
spectrum whose $k$'th space is $\{B^{(k)} \Rep({\cal G})\}$.  The
honestly commutative product structure induced by $\otimes$ on
$\Rep({\cal G})$ will give this object the structure of an
$E_\infty$--ring spectrum.

To make this construction more rigid, we will make use of the
$\Gamma$--spaces of Segal \cite{segal74}, using the smash product
defined by Lydakis in \cite{lydakis99}.  These are a model of
connective spectra particularly suited for this kind of application.
See Schwede \cite{schwede99} for an exposition of the homotopy theory
of these objects; in particular, the appendix contains an introduction
to the ``topological $\Gamma$--spaces'' that we will be using.  The
relationship of these objects with other structured categories of
spectra can be found in Mandell--May--Schwede--Shipley
\cite{mmss01}.  (In that article,
$\Gamma$--spaces are referred to as ${\cal F}$--spaces.)  The structured
ring and module objects created in this section can be converted into
symmetric spectra using a Quillen equivalence; by
\cite[Theorems~0.1,0.3]{mmss01}, the resulting objects are commutative
monoid objects in symmetric spectra.  Similarly, applying
\cite[Theorem~5.1]{schwede01}, the resulting monoids in symmetric
spectra can be converted using a Quillen equivalence to commutative
$\mb S$--algebras.

We first recall some definitions and properties of $\Gamma$--spaces.

\begin{defn}
$\Gamma^o$ is the category of finite based sets.
\end{defn}

\begin{defn}
A $\Gamma$--space is a functor $M \co \Gamma^o \to {\bf Top}_*$ from
finite based sets to based spaces such that $M(*) = *$.  A map of
$\Gamma$--spaces is a natural transformation of functors.
\end{defn}

\begin{defn}
The sphere spectrum $\Gamma$--space is defined by $\mb S(X) = X$.
\end{defn}

\begin{defn}
If $M$ is a $\Gamma$--space and $K$ is a pointed space, define the
tensor product $\Gamma$--space by
\[
(M \otimes K)(X) = M(X) \smsh{} K.
\]
\end{defn}

\begin{defn}
If $M$ and $N$ are $\Gamma$--spaces, the smash product $\Gamma$--space
is defined as follows: for $Z \in \Gamma^o$,
\[
M \smsh{} N(Z) = \colim_{X \smsh{} Y \to Z} M(X) \smsh{} N(Y)
\]
The object $\mb S \otimes K$ is the suspension spectrum
$\Gamma$--space associated to $K$.
\end{defn}

Left for exercises are proofs of the standard facts: the smash product
is a symmetric monoidal product on $\Gamma$--spaces with unit $\mb S$,
and there are identifications
\[
M \smsh{} (N \otimes K) \cong (M \smsh{} N) \otimes K.
\]

\begin{rmk}
This definition of the smash product is a left Kan extension of the
functor $M \smsh{} N\co \Gamma^o \times \Gamma^o \to {\bf Top}_*$
along the functor $\smsh{}\co \Gamma^o \times \Gamma^o \to \Gamma^o$.
Therefore, in order to define a map from $M \smsh{} N$ to $P$ it
suffices to define natural maps $M(X) \smsh{} N(Y) \to P(X \smsh{} Y)$.
\end{rmk}

A symmetric spectrum is constructed from a $\Gamma$--space $M$ in the
following way.  First, we prolong the functor $M$ to a functor on
finite pointed simplicial sets by applying $M$ levelwise and 
taking geometric realization.  Second, we note that there is a natural
assembly map as follows for $X \in \Gamma^o$:
\[
X \smsh{} M(Y) \cong \bigvee_X M(Y) \to M\left(\bigvee_X Y\right)
\cong M(X \smsh{} Y).
\]
This can also be prolonged to the case when $X$ is the geometric
realization of a finite simplicial set.  The resulting symmetric
spectrum is the sequence of spaces $M((S^1)^{\wedge n})$, with
structure maps coming from the symmetric group action on the smash
product and the structure map
\[
S^1 \smsh{} M\left((S^1)^{\wedge n}\right) \to M\left((S^1)^{\wedge
    (n+1)}\right).
\]
This association respects the symmetric monoidal structure.  We will
denote this spectrum by $\Sp(M)$.

\begin{defn}
If $A$ is an abelian topological monoid, define a $\Gamma$--space
$\tilde A$ as follows.  For $Z \in \Gamma^o$, define
\[
\tilde A(Z) = \Map_*(Z,A),
\]
where the unit of $A$ is regarded as its basepoint.  For $f \co Z \to
Z'$ and $\alpha \in \Map_*(Z,A)$, define
\[
(f_* \alpha)(z') = \sum_{f(z) = z'} \alpha(z).
\]
\end{defn}

\begin{prop}
A continuous bilinear pairing $\mu \co A \times B \to C$ of abelian
topological monoids naturally gives rise to a map $\tilde \mu \co
\tilde A \smsh{} \tilde B \to \tilde C$ of $\Gamma$--spaces.
\end{prop}

\begin{proof}
This is defined in the obvious way.  We define $\tilde \mu \co
\tilde A(X) \smsh{} \tilde B(Y) \to \tilde C(X \smsh{} Y)$ by
\[
\tilde \mu(\alpha \smsh{} \beta) (x \smsh{} y) = \mu(\alpha(x), \beta(y)).
\]
That this is natural in $X$ and $Y$ follows from the
bilinearity of $\mu$ as follows.  Suppose $f\co X \to X'$. Then
\begin{eqnarray*}
\tilde \mu(f_* \alpha \smsh{} \beta) (x' \smsh{} y)
&=& \mu(f_* \alpha(x'), \beta(y)) \\
&=& \mu\left(\sum_{f(x) = x'} \alpha(x), \beta(y)\right) \\
&=& \sum_{f(x)  = x'} \tilde \mu(\alpha \smsh{} \beta)(x\smsh{}y) \\
&=& (f \smsh{} 1)_* \tilde \mu(\alpha \smsh{} \beta)(x \smsh{} y).
\end{eqnarray*}
The naturality in $Y$ is symmetric.
\end{proof}

\begin{rmk}
Any abelian topological monoid $A$ has a unique bilinear pairing
$\mb N \times A \to A$ taking $1 \times a$ to $a$.  If $R$ is a
topological semiring (an object with addition and multiplication
operations, but no additive inverses), there is a unique map $\mb N
\to R$ respecting the multiplication.  As a result, in
$\Gamma$--spaces $\tilde{\mb N}$ is a commutative monoid, any $\tilde
A$ is a module over $\tilde{\mb N}$, and maps of topological monoids
realize to maps of $\tilde{\mb N}$--modules.  In other words, the
functor $\widetilde{(-)}$ has range in $\tilde{\mb N}$--modules.

Any $\tilde R$ comes equipped with a multiplicative map $\tilde{\mb N}
\to \tilde R$.  Additionally, if $A \times B \to C$ is a bilinear pairing,
the map $\tilde A \smsh{} \tilde B \to \tilde C$ respects the
$\tilde{\mb N}$--action.
\end{rmk}

We now define the deformation representation ring functor.

\begin{defn}
The deformation representation ring $R[{\cal G}]$ is the spectrum 
$\Sp(\widetilde{\Rep({\cal G})})$.
\end{defn}

For any abelian topological monoid $A$, the $\Gamma$--space $\tilde A$
is {\em special\/} (see \cite{schwede99}), meaning that the map
\[
\tilde A(X \vee Y) \to \tilde A(X) \times \tilde A(Y)
\]
is an equivalence for all $X, Y$.  (In this case, it is actually a
homeomorphism.)  As a result, the infinite loop space $\Omega^\infty
\Sp \tilde A$ can be analyzed.  The adjoint of the assembly map
$\tilde A(S^0) \to \Omega \tilde A(S^1)$ is the homotopy group
completion map $A \to \Omega B A$ of the monoid $A$.  Since $(\tilde A
\otimes S^n)(S^0)$ is connected, by \cite[Proposition~1.4]{segal74}
the map $\tilde A(S^n) \to \Omega \tilde A (S^n \smsh{} S^1)$ is a
homotopy equivalence for $n > 0$, 
implying that $\Omega^n \tilde A(S^n) \simeq \Omega B A$ for $n > 0$.
Taking limits shows that $\Omega^\infty \Sp(\tilde A) \simeq \Omega B A$.

$R[-]$ is a contravariant functor from groups to commutative monoids
in symmetric spectra.  In fact, $\Omega^\infty \Sp(\tilde{\mb N})
\simeq \mb Z$, so the spectrum $\Sp(\tilde{\mb N})$ is an
Eilenberg--MacLane spectrum equivalent to $\eilm{\mb Z}$.  (We will
show this explicitly in \fullref{prop:repdga}.)  The functor
$R[-]$ takes values in $\eilm{\mb Z}$--algebras. There is also a
natural augmentation $\varepsilon \co R[{\cal G}] \to \eilm{\mb
Z}$ coming from the augmentation $\Rep({\cal G}) \to \mb N$.

\begin{prop}
\label{prop:repdga}
Suppose ${\cal G}$ is finitely topologically generated, nilpotent, and
either discrete or profinite.  Then $R[{\cal G}]$ is equivalent
to $\eilm{\mb Z} \smsh{} \Sigma^\infty_+ \Irr({\cal G})$ as a module
over the group ring spectrum $\eilm{\mb Z} \smsh{} \Sigma^\infty_+
{\cal G}^*$.

As a result, $\pi_*(R[{\cal G}]) \cong H_*(\Irr({\cal G}))$.
\end{prop}

\begin{proof}
We have a natural identification of spaces $\widetilde{\Rep({\cal
G})}(S^0) = \Rep({\cal G})$, and so there are natural maps of pointed
spaces:
\[
({\cal G}^*)_+ \to \Irr({\cal G})_+ \to R[{\cal G}](S^0).
\]
For $X \in \Gamma^o$, taking a wedge over the points of $X$ gives
natural maps as follows:
\[
X \smsh{} ({\cal G}^*)_+ \to X \smsh{} \Irr({\cal G})_+ \to X \smsh{}
\widetilde{\Rep({\cal G})}(S^0) \to \widetilde{\Rep({\cal G})}(X).
\]
The right-hand map is the assembly map.

Naturality implies that these give rise to maps 
\[
\mb S \otimes ({\cal G}^*)_+ \to \mb S \otimes \Irr({\cal G})_+ \to
\widetilde{\Rep({\cal G})}
\]
of $\Gamma$--spaces.  Additionally, for $X,Y \in \Gamma^o$ there
is a commutative diagram as follows. 
$$\xymatrix{ 
X \smsh{} ({\cal G}^*)_+ \smsh{} Y \smsh{} ({\cal G}^*)_+ \ar[r]
\ar[d]_\otimes &
{\widetilde{\Rep({\cal G})}}(X) \smsh{} {\widetilde{\Rep({\cal
G})}}(Y) \ar[d]^\otimes \\ 
X \smsh{} Y \smsh{} ({\cal G}^*)_+ \ar[r] &
\widetilde{\Rep({\cal G})}(X \smsh{} Y) 
}$$
This is natural in $X$ and $Y$, and as a result it translates into the
following diagram of $\Gamma$--spaces.
$$\xymatrix{
(\mb S \otimes ({\cal G}^*)_+) \smsh{} (\mb S \otimes ({\cal G}^*)_+)
\ar[r] \ar[d] &
\widetilde{\Rep({\cal G})} \smsh{} \widetilde{\Rep({\cal G})} \ar[d] \\
\mb S \otimes ({\cal G}^*)_+  \ar[r] &
\widetilde{\Rep({\cal G})}
}$$
This gives $\widetilde{\Rep({\cal G})}$ a natural structure of an
algebra over the group ring $\Gamma$--space $\mb S[{\cal G}^*] = \mb S
\otimes ({\cal G}^*)_+$.  $\widetilde{\Rep({\cal G})}$ is already a
commutative $\tilde{\mb N}$--algebra, so $\widetilde{\Rep({\cal G})}$
is naturally an algebra over $\tilde{\mb N} \smsh{} \mb S[{\cal G}^*]
\cong \tilde{\mb N}[{\cal G}^*]$. 

The map $\mb S \otimes \Irr({\cal G})_+ \to \widetilde{\Rep({\cal
G})}$ also extends naturally to a map $\tilde{\mb N} \otimes
\Irr({\cal G})_+ \to \widetilde{\Rep({\cal G})}$.  Similarly, we can
give $\tilde{\mb N} \otimes \Irr({\cal G})_+$ the structure of a
module over $\tilde{\mb N}[{\cal G}^*]$ in a natural way making the
map $\tilde{\mb N} \otimes \Irr({\cal G})_+ \to \widetilde{\Rep({\cal
G})}$ a map of modules over $\tilde{\mb N}[{\cal G}^*]$.  We claim
that this map gives a stable equivalence of spectra.  In fact, this
follows from \fullref{thm:nilrep} and a generalized Dold--Thom
theorem, which states that the spectrum
$\Sp\left(\widetilde{\Sym^\infty(M)}\right)$ is equivalent to $\eilm{\mb Z}
\smsh{} \Sigma^\infty_+ M$ for any space $M$ of the homotopy type of a
CW--complex.  The natural map inducing the Dold--Thom equivalence
\[
\tilde{\mb N} \otimes \Irr({\cal G})_+ \to
\widetilde{\Sym^\infty(\Irr({\cal G}))},
\]
upon applying $\Sp$, becomes an equivalence
\[
\eilm{\mb Z} \smsh{} \Sigma^\infty_+ \Irr({\cal G})_+ \to R[{\cal G}].
\]
Since this was a map of modules over the $\Gamma$--space $\tilde{\mb
N}[{\cal G}^*]$, applying the monoidal functor $\Sp$ gives a map of
modules over the group ring spectrum $\eilm{\mb Z} \smsh{}
\Sigma^\infty_+ {\cal G}^*$.
\end{proof}

\begin{cor}
\label{cor:finiterepring}
When ${\cal G}$ is finite, $R[{\cal G}]$ is an Eilenberg--MacLane
spectrum associated to the ordinary complex representation ring of
${\cal G}$, and when ${\cal G}$ is profinite, $R[{\cal G}] \cong
\colim_{[{\cal G}:N] < \infty} R[{\cal G}/N]$ is also an
Eilenberg--MacLane spectrum.
\end{cor}

\begin{proof}
In these cases, $\Irr({\cal G})$ is discrete.
\end{proof}

\section{$R[{\cal G}]$ and Bousfield localization}
\label{sec:compiso}

Throughout this section ${\cal G}$ is a finitely generated discrete
nilpotent group, having profinite completion $\compl{{\cal G}}$ and
pro-$p$--completions $\compl{{\cal G}}_p$.  Let $\ell$ denote a prime at
which completions of spectra will be taken.  The ``localization'' of
an $R$--module $M$ will always refer to Bousfield localization with
respect to $\eilm{\mb F_\ell}$, where $\eilm{\mb F_\ell}$ has an
explicit module structure over some base $\mb S$--algebra $R$.  (In
this section, we will make use of $\mb S$--modules as our base category
of spectra, unless otherwise specified.  As described in the beginning
of \fullref{sec:repringspectra}, the ring objects of the previous
section will be implicitly converted into $\mb S$--algebras, retaining
their names.)

There is an augmentation map $\varepsilon \co R[{\cal G}] \to \eilm{\mb
F_\ell}$ given by taking a representation to its dimension mod
$\ell$.  This is a map of commutative $\mb S$--algebras.

In this section we will proceed with the proof of
\fullref{thm:main} by breaking it up into the two following
statements.

\begin{prop}
\label{prop:main2}
The map $R[\compl{{\cal G}}] \to R[{\cal G}]$ is a
multiplicative $\eilm{\mb F_\ell}$--equivalence, and 
the map $R[\compl{{\cal G}}_\ell] \to R[\compl{{\cal G}}]$ is a
multiplicative $\eilm{\mb F_\ell}$--equivalence.
\end{prop}

\begin{proof}
The proof of the first statement is organized by progressively proving
it for more general cases.

The case when ${\cal G}$ is a finite abelian group follows because
$\compl{{\cal G}} \cong {\cal G}$.  \fullref{sec:groupZ} carries
out the proof in the special case ${\cal G} = \mb Z$.

In \fullref{sec:abcase}, we prove \fullref{lem:groupprod}, which
shows that if $R[G] \to R[G']$ is a multiplicative $\eilm{\mb
F_\ell}$--equivalence, so is $R[G \times H] \to R[G' \times H]$.  If
${\cal G}$ is a finitely generated abelian group,
\fullref{lem:groupprod} then allows us to prove the theorem by using
the existence of a decomposition ${\cal G} \cong \mb Z^d \oplus A$,
where $A$ is a finite abelian group.

In \fullref{sec:generalnil}, we finish the proof of the first
statement by making use of \fullref{prop:compiso} and the
structure of $R[{\cal G}]$ and $R[\compl{{\cal G}}]$ as algebras over
$R[({\compl{{\cal G}}})_{\rm ab}]$.

If ${\cal G}$ is nilpotent, then by definition some iterated bracket map
\[
[-,[-,[\cdots]]]\co {\cal G}^r \to {\cal G}
\]
is trivial.  The image of ${\cal G}^r$ is dense in $(\compl{{\cal
G}})^r$, and so the iterated bracket map is also trivial for
$\compl{\cal G}$.  Hence $\compl{\cal G}$ is nilpotent.

Using this, the proof of the second statement is the content of
\fullref{sec:nonl}, \fullref{cor:nonl}.
\end{proof}

\subsection{The case of ${\cal G} = \mb Z$}
\label{sec:groupZ}
First, we compute the representation rings of the groups $\mb Z$, its
profinite completion $\hat{\mb Z}$, and the $p$--adic integers $\mb
Z_p$, where $p$ is a prime.  By the results in
\fullref{sec:repringspectra}, it suffices to compute the spaces of
irreducible representations.

Let $\mu_\infty$ be the group of roots of unity in $\mb C$, and let
$\mu_{p^\infty}$ be the subgroup consisting of elements whose order is
a power of $p$.

\begin{lem}
The maps $\Irr(\mb Z_p) \to \Irr(\hat{\mb Z}) \to \Irr (\mb Z)$ are
the maps
\[
\mu_{p^\infty} \to \mu_\infty \to S^1, 
\]
where $\mu_\infty$ and $\mu_{p^\infty}$ have the discrete topology.
\end{lem}

\begin{proof}
Any unitary representation of an abelian group is diagonalizable,
so the spaces of irreducible representations must be precisely the
spaces of 1--dimensional representations.
\end{proof}

\begin{prop}
The map $R[\hat{\mb Z}] \to R[\mb Z]$ is a multiplicative $\eilm{\mb
F_\ell}$--equivalence.
\end{prop}

\begin{proof}
We need to show that the following map is an equivalence:
\[
\eilm{\mb F_\ell} \to \eilm{\mb F_\ell} \smsh{R[\hat{\mb Z}]}
R[\mb Z].
\]
Choose a sequence of primitive roots of unity $\zeta_m$ of $\mu_m$
such that $\zeta_m = (\zeta_{dm})^d$.  We identify $R[\mb Z]$ with the
group ring spectrum $\eilm{\mb Z}[S^1]$.

By \cite[Proposition~IV.7.5]{ekmm97}, for a $\mb S$--algebra $R$ with
right module $M$ and left module $N$, there is an equivalence of
the geometric realization of the bar complex $B(M,R,N)$ with $M
\smsh{R} N$.  If $M$ and $N$ are modules over a directed system of
algebras $\{R_\alpha\}$ with homotopy colimit equivalent to $R$, taking
homotopy colimits implies that
\[
M \smsh{R} N \simeq \hocolim M \smsh{R_\alpha} N.
\]
(If the system is not directed, the homotopy colimit as $\mb
S$--algebras does not necessarily coincide with the homotopy colimit as
$\mb S$--modules.)  Therefore, the smash product commutes with directed
homotopy colimits in the $\mb S$--algebra variable.

The map $\hocolim_m \eilm{\mb Z}[\mu_m] \to R[\hat{\mb 
Z}]$ is an equivalence.  It therefore suffices to show that the
map
\[
\eilm{\mb F_\ell} \to \hocolim \eilm{\mb F_\ell} \smsh{\eilm{\mb Z}
  [\mu_m]} \eilm{\mb Z}[S^1]
\]
is an equivalence.

As a $\mu_m$--space, $S^1$ is homeomorphic to the homotopy colimit of
the diagram
\[
\mu_m \mathop{\rightrightarrows}^{\zeta_m}_{1} \mu_m.
\]
Applying $\eilm{\mb Z} \smsh{} \Sigma^\infty_+$, this leads to the
exact triangle of $\eilm{\mb Z}[\mu_m]$--modules
\[
\eilm{\mb Z}[\mu_m] \longoverto^{\zeta_m - 1} \eilm{\mb Z}[\mu_m] \to
\eilm{\mb Z}[S^1].
\]
Recall that the augmentation $\mb Z[\mu_m] \to \mb F_\ell$ is the
unique ring map taking the elements of $\mu_m$ to $1$.  Applying
$\eilm{\mb F_\ell} \smsh{\eilm{\mb Z}[\mu_m]} -$ to the above exact
triangle gives the exact triangle
\[
\eilm{\mb F_\ell} \overto^0 \eilm{\mb F_\ell} \to 
\eilm{\mb F_\ell} \smsh{\eilm{\mb Z}[\mu_m]} \eilm{\mb Z}[S^1].
\]
The inclusion $\iota\co \mu_m \to \mu_{dm}$ induces the following map
of exact triangles.
$$\xymatrix{
\eilm{\mb Z}[\mu_m] \ar[r]^{\zeta_m - 1} \ar[d]_{({\sum_{i=1}^d
  \zeta_{dm}^i)}\cdot \iota} &
\eilm{\mb Z}[\mu_m] \ar[r] \ar[d]_\iota &
\eilm{\mb Z}[S^1] \ar[d]_1\\
\eilm{\mb Z}[\mu_{dm}] \ar[r]^{\zeta_{dm} - 1} &
\eilm{\mb Z}[\mu_{dm}] \ar[r]&
\eilm{\mb Z}[S^1]
}$$
As a result, after smashing with $\eilm{\mb F_\ell}$, there is the
following induced map of exact triangles.
$$\xymatrix{
\eilm{\mb F_\ell} \ar[r]^{0} \ar[d]_{d} & 
\eilm{\mb F_\ell} \ar[r] \ar[d]_{1} &
\eilm{\mb F_\ell} \smsh{\eilm{\mb Z}[\mu_m]} \eilm{\mb Z}[S^1] \ar[d]
\\ 
\eilm{\mb F_\ell} \ar[r]^{0} &
\eilm{\mb F_\ell} \ar[r] &
\eilm{\mb F_\ell} \smsh{\eilm{\mb Z}[\mu_{dm}]} \eilm{\mb Z}[S^1]
}$$
Taking homotopy colimits in $m$, we find that the map
\[
\eilm{\mb F_\ell} \to \hocolim \eilm{\mb F_\ell} \smsh{\eilm{\mb
    Z}[\mu_{m}]} \eilm{\mb Z}[S^1]
\]
is an equivalence, as desired.
\end{proof}

\subsection{The case of ${\cal G}$ finitely generated abelian}
\label{sec:abcase}

If ${\cal G}$ is a finitely generated abelian group, we will show in this
section that the map $R[\compl{{\cal G}}] \to R[{\cal G}]$ induces an
isomorphism after localization.

If ${\cal G}$ is cyclic, then either ${\cal G}$ is finite or ${\cal G}
= \mb Z$, and both of these cases have already been proven.  The case
of a finitely generated abelian group follows immediately from the
following lemma.

\begin{lem}
\label{lem:groupprod}
If $G$, $G'$, and $H$ are nilpotent groups and a map $G \to G'$
induces a multiplicative $\eilm{\mb F_\ell}$--equivalence $R[G']
\to R[G]$, then so does the map $G \times H \to G' \times H$.
\end{lem}

\begin{proof}
We first note that from \fullref{thm:nilrep}, $R[G \times H]
\cong R[G] \smsh{\eilm{\mb Z}} R[H]$ because $\Irr(G \times H) \cong
\Irr(G) \times \Irr(H)$.

We need to show that the map $R[G \times H] \to R[G' \times
H]$ induces an isomorphism after applying $\eilm{\mb F_\ell}
\smsh{R[G\times H]} (-)$.

As $R[G] \smsh{\eilm{\mb Z}} R[H]$--modules, we have that
\[
\eilm{\mb F_\ell} \simeq \eilm{\mb F_\ell} \smsh{\eilm{\mb Z}}
\eilm{\mb Z}\]
and
\[
R[G' \times H] \simeq R[G'] \smsh{\eilm{\mb Z}} R[H].
\]
By carrying out the proof of \cite[Proposition~III.3.10]{ekmm97}, with
base ring $\eilm{\mb Z}$ rather than $\mb S$, we get a smash product identity
\[
\left(A \smsh{\eilm{\mb Z}} B\right) \smsh{(R \smsh{\eilm{\mb Z}} R')}
\left(C \smsh{\eilm{\mb Z}} D\right) \cong \left(A \smsh{R} C\right)
\smsh{\eilm{\mb Z}} \left(B \smsh{R'} D\right).
\]
This allows us to obtain the following equivalences:
$$
\eqalignbot{
\eilm{\mb F_\ell} \smsh{R[G \times H]} R[G'\times H] &\simeq
\left(\eilm{\mb F_\ell}\smsh{R[G]}R[G']\right) \smsh{\eilm{\mb Z}}
\left(\eilm{\mb Z} \smsh{R[H]} R[H]\right) \cr
&\simeq \left(\eilm{\mb F_\ell} \smsh{R[G]}R[G]\right) \smsh{\eilm{\mb Z}}
\left(\eilm{\mb Z}\smsh{R[H]}R[H]\right) \cr
&\simeq \eilm{\mb F_\ell}\smsh{R[G\times H]} \left(R[G] \smsh{\eilm{\mb Z}}
  R[H]\right) \cr
&\simeq \eilm{\mb F_\ell}.}
\proved$$
\end{proof}

\subsection{The case of a general nilpotent ${\cal G}$}
\label{sec:generalnil}

We now complete the proof of the first statement of
\fullref{prop:main2}.  The general case of a discrete finitely
generated nilpotent group ${\cal G}$ will follow from the machinery of
\fullref{sec:completions} and our explicit description of the
space of irreducible representations of a nilpotent group.

Let $k$ be $R[(\compl{{\cal G}})^*]$, which is an Eilenberg--MacLane
spectrum associated to the representation ring of $({\compl{\cal
G}})_{\rm ab}$ by \fullref{cor:finiterepring}.  The restriction
map $k \to R[\compl{{\cal G}}]$ gives $R[\compl{{\cal G}}]$ and $R[{\cal
G}]$ the structure of $k$--algebras.

By \fullref{prop:compiso}, it suffices to show that there is
an equivalence
\[
\eilm{\mb F_\ell} \smsh{k} R[\compl{{\cal G}}] \to \eilm{\mb F_\ell}
  \smsh{k} R[{\cal G}].
\]
From \fullref{cor:fiberprod} we have that
\[
\Irr({\cal G}) \cong {\cal G}^* \mathop{\times}_{(\compl{{\cal G}})^*}
\Irr(\compl{{\cal G}}) .
\]
By \fullref{thm:irrspace}, the space $\Irr(\compl{{\cal G}})$
breaks up as a coproduct of orbits of $(\compl{{\cal G}})^*$ with
finite stabilizers.  If $K$ is a finite subgroup of $(\compl{{\cal
G}})^*$, then $K$ acting freely on ${\cal G}^*$ implies that there is
a weak equivalence of ${\cal G}^*/K$ with the realization of the bar
construction $B\left({\cal G}^*,K,*\right)$.  The suspension spectrum
functor is strong symmetric monoidal by
\cite[Proposition~II.1.2]{ekmm97}, so applying $\Sigma^\infty_+$ gives
an equivalence
\begin{eqnarray*}
\Sigma^\infty_+ ({\cal G}^*/K) &\simeq&
B\left(\Sigma^\infty_+{\cal G}^*,\Sigma^\infty_+ K,\mb S\right)\\
&\simeq& 
\Sigma^\infty_+{\cal G}^* \smsh{\Sigma^\infty_+ K} \mb S.
\end{eqnarray*}
Similarly,
\[
\Sigma^\infty_+ (\compl{{\cal G}})^*/K \simeq
\Sigma^\infty_+(\compl{{\cal G}})^* \smsh{\Sigma^\infty_+ K} \mb S.
\]
The map $\Sigma^\infty_+(\compl{{\cal G}})^*/K \to \Sigma^\infty_+{\cal
G}^*/K$ 
then induces an equivalence
\[
\Sigma^\infty_+ {\cal G}^* \smsh{\Sigma^\infty_+ (\compl{{\cal G}})^*}
\Sigma^\infty_+(\compl{{\cal G}})^*/K \to \Sigma^\infty_+{\cal G}^*/K.
\]
Smashing with $\eilm{\mb Z}$ and taking the union over all orbits, we
find that
\[
\eilm{\mb Z} \smsh{} \Sigma^\infty_+ \Irr({\cal G}) \simeq
\left(\eilm{\mb Z} \smsh{} \Sigma^\infty_+ {\cal G}^*\right) \smsh{k}
\left(\eilm{\mb Z} \smsh{} \Sigma^\infty_+ \Irr(\compl{{\cal G}})\right).
\]
Applying $\eilm{\mb F_\ell} \smsh{k} -$ to this equivalence gives the
following new equivalences.
\begin{eqnarray*}
\eilm{\mb F_\ell} \smsh{k} R[{\cal G}] &\simeq& 
\eilm{\mb F_\ell} \smsh{k} 
\left(\eilm{\mb Z} \smsh{} \Sigma^\infty_+ \Irr({\cal G})\right) \\ 
&\simeq& \eilm{\mb F_\ell} \smsh{k} 
\left(\eilm{\mb Z} \smsh{} \Sigma^\infty_+ {\cal G}^*\right) \smsh{k}
\left(\eilm{\mb Z} \smsh{} \Sigma^\infty_+ \Irr(\compl{{\cal G}})\right)\\
&\simeq& \left(\eilm{\mb F_\ell} \smsh{k} 
R[{\cal G}_{\rm ab}] \right)\smsh{k}
\left(\eilm{\mb Z} \smsh{} \Sigma^\infty_+ \Irr(\compl{{\cal G}})\right)\\
&\simeq& \eilm{\mb F_\ell} \smsh{k} R[{\compl{\cal G}}]
\end{eqnarray*}
(The equivalence $\eilm{\mb F_\ell} \to \eilm{\mb F_\ell} \smsh{k}
R[{\cal G}_{\rm ab}]$ follows from the results of
\fullref{sec:abcase}, applied to ${\cal G}_{\rm ab}$.)

\subsection{Localizations for non--$\ell$--groups}
\label{sec:nonl}

The result of this section is that the representation rings of
profinite groups with no $\ell$--primary part become trivial upon
localization.  The first result of this section does not depend on
${\cal G}$ being nilpotent.

\begin{lem}
Suppose ${\cal G}$ is a profinite group whose finite quotients all have order
prime to $\ell$.  Then the augmentation map $R[{\cal G}] \to \eilm{\mb Z}$
is a multiplicative \eilm{\mb F_\ell}--equivalence.
\end{lem}

\begin{proof}
Note that $R[{\cal G}] \simeq \hocolim R[{\cal G}_\beta]$, where the
${\cal G}_\beta$ form the directed system of finite quotients of
${\cal G}$.  As a result, it is therefore necessary and sufficient
to prove that the map $\eilm{\mb F_\ell} \to \eilm{\mb F_\ell}
\smsh{R[{\cal G}]} \eilm{\mb Z}$ is an equivalence when ${\cal G}$
is a finite prime-to--$\ell$--group. Because $R[{\cal G}] \simeq
\eilm{R}$ for a ring $R$, the derived category of $R$--modules, with
derived tensor product, is equivalent to the category of modules
over $R[{\cal G}]$, by \cite[Theorem~IV.2.4]{ekmm97}.  Therefore, it
suffices to show that the map $R \to \mb Z$ induces an isomorphism
\[
\mb F_\ell \to \mb F_\ell \otimes^{\mb L}_{R} \mb Z.
\]
$R$ is projective over $\mb Z$, so we can compute these $\Tor$
groups using the bar complex
\[
\cdots \to \mb F_\ell \otimes R \otimes R \otimes \mb Z \to \mb
F_\ell \otimes R \otimes \mb Z \to \mb F_\ell \otimes \mb Z \to 0,
\]
where the tensor products are over $\mb Z$.  However, this is
obviously isomorphic to another bar complex,
\[
\cdots \to \mb F_\ell \otimes (\mb F_\ell \otimes R) \otimes (\mb
F_\ell \otimes R) \otimes \mb F_\ell \to \mb F_\ell \otimes (\mb
F_\ell \otimes R) \otimes \mb F_\ell \to \mb
F_\ell \otimes \mb F_\ell \to 0,
\]
which is the bar complex for computing $\Tor^{\mb F_\ell \otimes
  R}_*(\mb F_\ell, \mb F_\ell)$.

If $\bar{\mb F}_\ell$ is an algebraic closure of $\mb F_\ell$, then
$\bar{\mb F}_\ell \otimes -$ is an exact functor from $\mb F_\ell
\otimes R$--modules to $\bar{\mb F}_\ell \otimes R$--modules that
takes projective objects to projective objects and commutes with
tensor products.  Therefore,
\[
\bar{\mb F}_\ell \otimes \Tor_*^{\mb F_\ell \otimes R} ( \mb
F_\ell, \mb F_\ell) \cong \Tor_*^{\bar{\mb F}_\ell \otimes R}
  (\bar{\mb F}_\ell, \bar{\mb F}_\ell).
\]
The characteristic $\ell$ does not divide $|{\cal G}|$, so $\bar{\mb
F}_\ell \otimes R$ is isomorphic, by basic character theory, to the
ring of $\bar{\mb F}_\ell$--valued class functions on ${\cal G}$.  In
other words, $\bar{\mb F}_\ell \otimes R \cong \prod \bar{\mb
F}_\ell$.  This ring is semisimple, and so it has no higher $\Tor$
groups.

Because $\bar{\mb F}_\ell \otimes (-)$ is faithful on $\mb
F_\ell$--modules, this shows that
\[
0 \cong \Tor_*^{\mb F_\ell \otimes R}(\mb F_\ell, \mb F_\ell) \cong
\Tor_*^{R}(\mb F_\ell, \mb Z)
\]
for $* > 0$, as desired.
\end{proof}

\begin{cor}
\label{cor:nonl}
If $\cal G$ is a nilpotent profinite group, the map
$R[\compl{{\cal G}}_\ell] \to R[{\cal G}]$ is a multiplicative
\eilm{\mb F_\ell}-equivalence.
\end{cor}

\begin{proof}
Because $\cal G$ is nilpotent and profinite, it is an inverse limit
of nilpotent finite groups, each of which is canonically a product
of its (unique) Sylow subgroups \cite[Exercise~8.8]{serre77}.  This
makes ${\cal G}$ an inverse limit of groups of the form $K_1 \times
K_2$, where $K_1$ ranges over the $\ell$--group quotients of ${\cal G}$
and $K_2$ ranges over the prime-to--$\ell$ quotients.  Therefore, ${\cal
G}$ can be canonically expressed as a direct product ${\cal G}
\cong \compl{{\cal G}}_\ell \times H$, where $H$ is the inverse limit
of groups whose orders are prime to $\ell$.  The result now follows
from \fullref{lem:groupprod}.
\end{proof}

\section{Localization in the case of the Heisenberg group}
\label{sec:heiscomp}

Let $H$ be the integer Heisenberg group; ie, the group of $3 \times
3$ upper triangular matrices of the form:
\[
\begin{bmatrix}
1 & \mb Z & \mb Z \\ 0 & 1 & \mb Z \\ 0 & 0 & 1
\end{bmatrix}
\]
Alternatively, $H$ has presentation
\[
\left\langle x, y, z\ \big|\  [x,z] = [y,z] = 1,
  [x,y] = z\right\rangle.
\]

\subsection{The representation ring of the Heisenberg group}

The structure of the set of irreducible representations of the group
$H$ was made explicit in \cite{nm89}.  The main result of this section
is to compute the product structure in the representation ring.
Recall that $\mu_\infty$ is the set of roots of unity in $\mb C$, and
$\mu_{p^\infty}$ is the set of roots of $p$--power order.

\begin{prop}
\label{prop:Hrepring}
The space $H^*$of irreducible unitary representations of $H_{ab}$, the
abelianization of $H$, is the torus $S^1 \times S^1$.  As a space acted
on by $H^*$ by tensor product,
\[
\Irr(H) \cong \coprod_{\zeta \in \mu_{\infty}} \left( S^1 \times S^1 /
  I_\zeta \right) [V_\zeta],
\]
where $I_\zeta$ is the subgroup generated by $(\zeta,1)$ and
$(1,\zeta)$.  The $[V_\zeta]$ are a specific family of orbit
representatives, with $[V_1]$ the trivial representation.  The tensor
product on $\Irr(H)$ is given as follows.

If $\zeta$ and $\zeta'$ have relatively prime orders,
$[V_\zeta][V_{\zeta'}] = [V_{\zeta\zeta'}]$. Otherwise, suppose
$\zeta$ has order $p^k$, $\zeta'$ has order $p^l$, and $\zeta\zeta'$
has order $p^m$, with $k \geq l$.  Then the product is as follows.
\[
[V_\zeta] [V_{\zeta'}] \cong \left\{
\begin{matrix}
p^l [V_{\zeta\zeta'}] & \hbox{ if } k = m \\
p^m \left[\sum_{i,j = 1}^{p^{k-m}} (\zeta^i,
  \zeta^j)\right] [V_{\zeta\zeta'}] & \hbox{ if } k > m
\end{matrix}
\right.
\]
\end{prop}

\begin{proof}
We will first recall the isomorphism classification of irreducible
unitary representations of $H$.  In \cite[Theorem~4]{nm89}, it is
proved that simple representations of $H$ of degree $r$ are
classified by triples $(\alpha, \beta, \zeta)$ of nonzero complex
numbers, with $\zeta$ a primitive $r$'th root of unity, such that
$(\alpha, \beta, \zeta) \sim (\alpha', \beta', \zeta')$ if and only
if $\alpha^r = (\alpha')^r$, $\zeta = \zeta'$, and $(\beta)^r =
(\beta')^r$.  (The triple $(\alpha,\beta,\zeta)$ corresponds to
$[\alpha^r,\beta,\zeta]$ in the notation of \cite{nm89}; the
notation we use makes multiplicative relations simpler.)

More specifically, the representation $(\alpha,\beta,\zeta)$, for
$\zeta$ a primitive $r$'th root of unity, is induced from the closed
subgroup $H_r$ generated by $\langle x^r, y, z \rangle$.  Because
$[x^r,y] = z^{r}$ in $H$, we can define a character of $H_r$ by
\[
\omega\co x^r \mapsto \alpha^r,\ y \mapsto \beta,\ z \mapsto \zeta.
\]
Then $(\alpha,\beta,\zeta)$ corresponds to the representation of $H$
induced from $\omega$.  This representation is characterized by the
properties that on this representation, $z$ acts by the scalar $\zeta$,
the action of $x$ has an eigenvalue of $\alpha$, and the action of $y$
has an eigenvalue of $\beta$.

Specifically, the group of characters of $H$ is the collection of
representations $(\alpha, \beta, 1)$ for $\alpha, \beta \in \mb
C^\times \times \mb C^\times$.  In this representation, $x$ acts by
$\alpha$ and $y$ acts by $\beta$.  This character admits a unitary
structure if and only if $(\alpha,\beta) \in S^1 \times S^1$.

This characterization of irreducible representations allows us to
determine the multiplicative structure in the representation ring.  If
$g$ acts on $V$ with eigenvalues $\{\alpha_i\}_i$ and on $W$ with
eigenvalues $\{\alpha'_j\}_j$, then it acts on $V \otimes W$ with
eigenvalues $\{\alpha_i \alpha'_j\}_{i,j}$.

For example, under tensor product we have the formula
\[
(\alpha,\beta,\zeta) \otimes (\alpha',\beta',1) \cong (\alpha\alpha',
\beta\beta', \zeta),
\]
because in the tensor product representation the element $z$ still acts
by the scalar $\zeta$ and the elements $x$ and $y$ have eigenvectors
with eigenvalues $\alpha\alpha'$ and $\beta\beta'$, respectively.

By construction, if $\alpha$ and $\beta$ lie on the unit circle, the
character $\omega$ admits a unitary structure, and hence so does the
induced representation $(\alpha,\beta,\zeta)$.  Conversely, $\alpha$
and $\beta$ appear as eigenvalues in $(\alpha,\beta,\zeta)$, so this
representation admits a unitary structure only if $\alpha$ and $\beta$
lie on the unit circle.

The above considerations give the following decomposition of
$\Irr(H)$ as an $H^*$--space.  Explicity, 
\[
\Irr(H) \cong \coprod_{\zeta \in \mu_{\infty}} \left(S^1 \times S^1
  \Big/ \langle \zeta \rangle \times \langle
  \zeta \rangle\right) \cdot (1,1,\zeta).
\]
The homotopy groups of $R[H]$ are the homology groups of this space.
In order to determine the ring structure, it remains to identify the
tensor product $(1,1,\zeta) \otimes (1,1,\zeta')$.

Note that $z$ acts by the scalar $\zeta\zeta'$ on $(1,1,\zeta) \otimes
(1,1,\zeta')$, and so this tensor product is a direct sum of
representations of the form $(\alpha,\beta,\zeta\zeta')$.  If $\zeta$
and $\zeta'$ have relatively prime orders, $(1,1,\zeta) \otimes
(1,1,\zeta')$ has the same dimension as $(1,1,\zeta\zeta')$ and
contains eigenvectors of eigenvalue 1 for both $x$ and $y$, so
$(1,1,\zeta\zeta') = (1,1,\zeta) \otimes (1,1,\zeta')$.

Let $r$ and $s$ be the orders of $\zeta$ and $\zeta'$ respectively.
Let $t$ be the order of $\zeta \zeta'$, and $\eta$ be a primitive
generator for the group $\langle \zeta, \zeta' \rangle$ with order
$dt$ such that $\eta^d = \zeta\zeta'$.

The eigenvalues of $x$ and $y$ in $(1,1,\zeta)$ are
$\{\zeta^i\}_{i=1}^{r}$, and similarly for $(1,1,\zeta')$.
Therefore, we have that
\[
(1,1,\zeta) \otimes (1,1,\zeta') = \sum_{i,j=1}^{d} a_{i,j}
\left(\eta^i, \eta^j, \zeta\zeta'\right)
\]
for some coefficients $a_{i,j}$.

The representation $(1,1,\zeta)$ is invariant under tensoring with the
characters $(\zeta,1,1)$ and $(1,\zeta,1)$, so the same is true for is
$(1,1,\zeta) \otimes (1,1,\zeta')$, and similarly for multiplying by
$(\zeta',1,1)$ and $(1,\zeta',1)$.  This forces the coefficients
$a_{i,j}$ to be constant in $i$ and $j$.

As a result, calculating the dimensions of both sides, we find that
\[
(1,1,\zeta) \otimes (1,1,\zeta') = \frac{rs}{td^2}\sum_{i,j=1}^d
(\eta^i,\eta^j,\zeta\zeta').
\]
The formulation of the proposition can be recovered by writing
$[V_\zeta]$ for $(1,1,\zeta)$ and examining the following cases.

If $\zeta$ and $\zeta'$ have relatively prime orders, then $t = rs$
and $d=1$, giving the formula
\[
(1,1,\zeta) \otimes (1,1,\zeta') = (1,1,\zeta\zeta').
\]
Therefore, it suffices to now determine the multiplication rule when
the orders of $\zeta$ and $\zeta'$ are prime powers.

Suppose $r = p^k, s = p^l,$ and $t = p^m$, with $k \geq l$;
this forces $d = p^{k-m}$.  If $k = m$, then $t = p^k$ and $d=1$,
giving the formula
\[
(1,1,\zeta) \otimes (1,1,\zeta') = p^l (1,1,\zeta\zeta').
\]
Finally, if $k > m$, then we must have $k = l$, giving the fomula
\[
(1,1,\zeta) \otimes (1,1,\zeta') = p^m \sum_{i,j=1}^{p^{k-m}} (\eta^i,
\eta^j, \zeta\zeta'),
\]
as desired.
\end{proof}

\subsection{A Bousfield equivalent algebra to $R[H]$}

The results of the previous section allow us to compute the homotopy
groups of the $\eilm{\mb Z}$--algebra $R[H]$, which are isomorphic by
\fullref{prop:repdga} to the homology groups of $\Irr(H)$.

For convenience, we will write the order of a root of unity $\zeta$ as
$|\zeta|$.

\begin{cor}
\label{cor:irrhomology}
$\Irr(H)$ has the following homology groups:
\[
H_*(\Irr(H)) \cong \left\{
\begin{matrix}
\oplus_{\zeta \in \mu_{\infty}} \mb Z a_\zeta \hfill & \hbox{in
  dimension 0}\\
\oplus_{\zeta \in \mu_{\infty}} \left[\mb Z b_\zeta \oplus \mb Z
  c_\zeta\right] \hfill & \hbox{in dimension 1}\\
\oplus_{\zeta \in \mu_{\infty}} \mb Z d_\zeta \hfill & \hbox{in
  dimension 2}\\
0 \hfill & \hbox{otherwise.} \hfill 
\end{matrix}
\right.
\]
The products are given as follows.
\begin{eqnarray*}
a_\zeta a_{\zeta'} &=& \frac{|\zeta||\zeta'|}{|\zeta\zeta'|} a_{\zeta\zeta'}\\
a_\zeta b_{\zeta'} &=& |\zeta| b_{\zeta\zeta'}\\
a_\zeta c_{\zeta'} &=& |\zeta| c_{\zeta\zeta'}\\
a_\zeta d_{\zeta'} &=& \frac{|\zeta||\zeta\zeta'|}{|\zeta'|} d_{\zeta\zeta'}\\
b_\zeta c_{\zeta'} &=& |\zeta\zeta'| d_{\zeta\zeta'}
\end{eqnarray*}
\end{cor}

\begin{proof}
Write $a_\zeta$ for the element in $H_0(\Irr(H))$ corresponding to the
torus containing $[V_\zeta]$.  The multiplication formula for
$V_\zeta$ and $V_\zeta'$ implies the formula for $a_\zeta a_{\zeta'}$.

There are unique choices of generators for the homology of the torus
containing $[V_\zeta]$ such that $a_\zeta b_1 = |\zeta| b_\zeta$,
$a_\zeta c_1 = |\zeta| c_\zeta$, and $b_1 c_1 = d_1$, due to the
structure of $\Irr(H)$ as a set acted on by $H^*$.

The remaining product identities can all be derived from these because
the homology is torsion-free.
\end{proof}

The remainder of this section is devoted to showing that at the prime
$\ell$, much of the homotopy of $R = R[H]$ vanishes after Bousfield
localization.  Specifically, we will exhibit a subobject $\tilde R$ of $R$
such that the Bousfield localization of $R$ is the same as that of
$\tilde R$.

Consider the subspace
\[
X = \coprod_{\zeta \in \mu_{\ell^\infty}} \left(\frac{ S^1 \times S^1 }{
\langle \zeta \rangle \times \langle \zeta \rangle } \right) [V_\zeta]
\]
of $\Irr(H)$.  This subspace consists of representations in which the
central element acts by a $\ell$--power root of unity, and is closed under
$\otimes$ in the sense that the tensor product of any two
representations in $X$ is a direct sum of representations in $X$.

\begin{defn}
Define $\tilde R$ to be the subobject $\Sp(\widetilde{(\Sym^\infty
  X)})$ of $R[H]$.
\end{defn}

The map of abelian topological semirings $\Sym^\infty(X) \to
\Sym^\infty(\Irr(G)_+) \simeq \Rep(G)$ induces a map of commutative
$\mb S$--algebras $\tilde R \to R$.  On homotopy groups, this map is
the map $H_*(X) \to H_*(\Irr(H))$.

Both $R_*$ and $\tilde R_*$ have no torsion.  If we define $R_\ell =
\holim R/\ell^n$, $\pi_*(R_\ell) \cong \lim \pi_*(R)/\ell^n$, and
the map $R \to R_\ell$ gives an isomorphism after taking the smash
product $- \smsh{\eilm{\mb Z}} \eilm{\mb F_\ell}$.  By
\fullref{prop:compiso}, the map $R \to R_\ell$ is a
multiplicative $\eilm{\mb F_\ell}$--equivalence.  Similarly, we
can form $\tilde R_\ell$, and the map $\tilde R \to \tilde R_\ell$ is
a multiplicative $\eilm{\mb F_\ell}$--equivalence.

\begin{prop}
There is a map $R \to \tilde R_\ell$ of commutative $\mb S$--algebras
which is a multiplicative $\eilm{\mb F_\ell}$--equivalence.
\end{prop}

\begin{proof}
Suppose $\zeta$ is a root of unity of order relatively prime to $\ell$.
Then in $\pi_0(R_\ell)$, there is an element $a_\zeta/|\zeta|$ such that
$(a_\zeta/|\zeta|)^{|\zeta|} = 1$.  Because $|\zeta|$ is invertible in
$\pi_0(R_\ell)$, we can form the idempotent
\[
e_\zeta = \frac{1}{|\zeta|} \sum_{k=1}^{|\zeta|}
\left(\frac{a_\zeta}{|\zeta|}\right)^k = \frac{1}{|\zeta|}
\sum_{k=1}^{|\zeta|} \frac{a_{\zeta^k}}{|\zeta^k|}.
\]
This idempotent is an element of the group ring generated by
$a_\zeta/|\zeta|$ corresponding to the trivial representation.  The
kernel of multiplication by this idempotent is precisely the ideal 
generated by $1 - a_\zeta/|\zeta|$, or equivalently $|\zeta| - a_\zeta$.

The element $e_\zeta \in \pi_0(R_\ell) =
\mapset{R_\ell}{R_\ell}{R_\ell}$ can be viewed as a homotopy class of
self-maps of $R_\ell$.  We can then form the homotopy colimit
\[
e_\zeta R_\ell = \hocolim \left\{R_\ell \overto^{e_\zeta} R_\ell
\overto^{e_\zeta} R_\ell \overto^{e_\zeta} \cdots\right\},
\]
whose homotopy groups are $e_\zeta \pi_*(R_\ell)$.

By the K\"unneth spectral sequence of \cite[Theorem~IV.4.1]{ekmm97},
we have 
\[
\pi_*\left(M \smsh{R_\ell} e_\zeta R_\ell\right) \cong e_\zeta \pi_* M
\]
for any $R_\ell$--module $M$.  In particular, the map
\[
e_\zeta R_\ell \smsh{R_\ell} R_\ell \to 
e_\zeta R_\ell \smsh{R_\ell} e_\zeta R_\ell
\]
is an equivalence.  Therefore, the map $R_\ell \to e_\zeta
R_\ell$ is an $(e_\zeta R_\ell)$--equivalence of $R_\ell$--modules.
Additionally, $e_\zeta R_\ell$ is $(e_\zeta R_\ell)$--local, since any
map $f\co M \to e_\zeta R_\ell$ factors as
\[
M \overto^{\eta \smsh{} 1} e_\zeta R_\ell \smsh{R_\ell} M \overto^{1
  \smsh{} f} e_\zeta R_\ell \smsh{R_\ell} e_\zeta R_\ell \overto^\mu
e_\zeta R_\ell.
\]
Here $\eta$ is the unit and $\mu$ is the multiplication of the
$R_\ell$--ring spectrum $e_\zeta R_\ell$.  Therefore, $e_\zeta R_\ell$
is a Bousfield localization of $R_\ell$, and admits the structure of a
commutative $R_\ell$--algebra by \cite[Theorem~VIII.2.2]{ekmm97}.

We also have 
\[
\pi_*\left(\eilm{\mb F_\ell} \smsh{R_\ell} e_\zeta R_\ell\right) \cong
\pi_* \eilm{\mb F_\ell},
\]
because the idempotent $e_\zeta$ acts by 1 on $\pi_* \eilm(\mb
F_\ell)$.  Therefore, there is an isomorphism in the derived category
of $R_\ell$--modules
\[
\eilm{\mb F_\ell} \smsh{R_\ell} e_\zeta R_\ell \simeq \eilm{\mb F_\ell}.
\]
Taking homotopy colimits in $\zeta$ gives
\[
\eilm{\mb F_\ell} \smsh{R_\ell} \left(\hocolim_\zeta e_\zeta R_\ell\right)
\simeq \eilm{\mb F_\ell}.
\]
In other words, applying \fullref{prop:compiso} with base ring
$R_\ell$, the map $$R_\ell \to \hocolim e_\zeta R_\ell$$ induces an
equivalence of $\eilm{\mb F_\ell}$--localizations.

The map $\pi_*(R_\ell) \to \colim e_\zeta \pi_*(R_\ell)$ is the map that
takes the quotient by the ideal generated by all elements $a_\zeta -
|\zeta|$, where $\zeta$ varies over roots of unity relatively prime to
$\ell$.  The composite map $\pi_*(\tilde R_\ell) \to \pi_*(R_\ell) \to
\hocolim e_\zeta \pi_*(R_\ell)$ is an isomorphism.  The map $\tilde
R_\ell \to \hocolim e_\zeta R_\ell$ therefore is an equivalence.

Choosing a homotopy inverse, we can then form the composite map
\[
R \to R_\ell \to \hocolim e_\zeta R_\ell \we \tilde
R_\ell.
\]
This is a composite of multiplicative $\eilm{\mb
F_\ell}$--equivalences, and hence is a multiplicative $\eilm{\mb
F_\ell}$--equivalence by \fullref{lem:locawequiv}.
\end{proof}

\subsection{Localized homotopy of $R[H]$}
\label{sec:heisdone}

In the previous section, we found an object $\tilde R$ whose $\eilm{\mb
F_\ell}$--localization is equivalent to the localization of $R[H]$.
We will now show that the object $\tilde R_\ell = \holim \tilde
R/\ell^n$ is already $\eilm{\mb F_\ell}$--local.

\begin{thm}
$\tilde R_\ell$ is $\eilm{\mb F_\ell}$--local as an $\tilde R$--module.
\end{thm}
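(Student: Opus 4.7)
My plan is to exhibit $\tilde R_\ell = \holim \tilde R/\ell^n$ as a homotopy limit of $\eilm{\mb F_\ell}$-local $\tilde R$-modules, and then invoke the fact (from the remark following the definition of $E$-local in \fullref{sec:completions}) that $\eilm{\mb F_\ell}$-local objects are closed under homotopy limits. The essential structural input is that $\tilde R$ is by construction a commutative $\eilm{\mb Z}$-algebra, since it equals $\Sp$ applied to $\widetilde{\Sym^\infty X}$ and every such object is a module over $\tilde{\mb N}$. Moreover, the augmentation $\Sym^\infty X \to \mb N$ followed by reduction $\mb N \to \mb F_\ell$ promotes to a map of commutative $\mb S$-algebras $\tilde R \to \eilm{\mb F_\ell}$, so $\eilm{\mb F_\ell}$ becomes an $\tilde R$-algebra. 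By the same remark, every $\eilm{\mb F_\ell}$-module is therefore $\eilm{\mb F_\ell}$-local as an $\tilde R$-module.

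Next I would identify the base case of the tower. Since $\tilde R$ is an $\eilm{\mb Z}$-algebra, multiplication by $\ell$ on $\tilde R$ agrees with the smash of the identity with $\ell\co \eilm{\mb Z} \to \eilm{\mb Z}$, so its cofiber $\tilde R/\ell$ is naturally equivalent to $\tilde R \smsh{\eilm{\mb Z}} \eilm{\mb F_\ell}$. This is visibly an $\eilm{\mb F_\ell}$-module and hence $\eilm{\mb F_\ell}$-local. To extend this to the whole tower, I would smash the cofiber sequence $\eilm{\mb F_\ell} \to \eilm{\mb Z/\ell^{n+1}} \to \eilm{\mb Z/\ell^n}$ of $\eilm{\mb Z}$-modules (coming from the short exact sequence $0 \to \mb Z/\ell \to \mb Z/\ell^{n+1} \to \mb Z/\ell^n \to 0$) with $\tilde R$ over $\eilm{\mb Z}$ to obtain
\[
\tilde R/\ell \to \tilde R/\ell^{n+1} \to \tilde R/\ell^n.
\]
Since $\tilde R/\ell$ is $\eilm{\mb F_\ell}$-nilpotent and the $\eilm{\mb F_\ell}$-nilpotent objects form a thick subcategory, induction on $n$ shows each $\tilde R/\ell^n$ is $\eilm{\mb F_\ell}$-nilpotent, and in particular $\eilm{\mb F_\ell}$-local.

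It then follows immediately that $\tilde R_\ell = \holim \tilde R/\ell^n$ is $\eilm{\mb F_\ell}$-local, as required. No single step is genuinely hard; the argument is largely formal once the $\eilm{\mb Z}$-algebra structure on $\tilde R$ is in hand. The most delicate bookkeeping is checking that the $\mb S$-algebra structure on $\tilde R$ really does identify $\tilde R/\ell$ with the smash product $\tilde R \smsh{\eilm{\mb Z}} \eilm{\mb F_\ell}$, which reduces to tracking the construction of $\tilde R$ through \fullref{sec:repringspectra}.
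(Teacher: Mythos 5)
The formal outer layers of your argument (the tower of cofiber sequences, induction over $n$, closure of local objects under homotopy limits) match the paper, but the base case is where the whole content of the theorem lives, and your treatment of it has a genuine gap. The locality in the statement is with respect to $\eilm{\mb F_\ell}$ regarded as an $\tilde R$--module via the augmentation $\varepsilon$ (dimension mod $\ell$). The remark in \fullref{sec:completions} that every $E$--module is $E$--local rests on the adjunction $\mapset{E}{E \smsh{\tilde R} X}{M} \cong \mapset{\tilde R}{X}{M}$, which requires the given $\tilde R$--action on $M$ to factor through the $\tilde R$--algebra map $\varepsilon \co \tilde R \to \eilm{\mb F_\ell}$. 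The spectrum $\tilde R/\ell \simeq \tilde R \smsh{\eilm{\mb Z}} \eilm{\mb F_\ell}$ is indeed a module over $\eilm{\mb F_\ell}$ through the right-hand smash factor, but its $\tilde R$--action is through the left-hand factor and does \emph{not} factor through $\varepsilon$: on $\pi_0(\tilde R)/\ell$ the class $a_\zeta$ acts by multiplication by $a_\zeta$, not by its augmentation $|\zeta| \bmod \ell$. So "visibly an $\eilm{\mb F_\ell}$--module, hence local" does not apply. What your argument actually shows is that $\tilde R/\ell^n$ is local with respect to the $\tilde R$--algebra $\tilde R \smsh{\eilm{\mb Z}} \eilm{\mb F_\ell}$ (mod--$\ell$ locality); to pass from that to $\eilm{\mb F_\ell}$--locality you would need every $\eilm{\mb F_\ell}$--acyclic $\tilde R$--module to be mod--$\ell$ acyclic, which is not formal and is essentially equivalent to the theorem itself.

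The paper closes exactly this gap using the special algebraic structure of $\tilde R$ (and this is where replacing $R[H]$ by $\tilde R$, i.e.\ restricting to $\ell$--power roots of unity, is genuinely used): the augmentation ideal $I \subset \pi_0(\tilde R_\ell)$ satisfies $I^2 \subseteq (\ell)$, because $\ell$ divides $a_\zeta a_{\zeta'}$ for $\zeta,\zeta' \neq 1$ in $\mu_{\ell^\infty}$. Hence any $\pi_0(\tilde R_\ell)$--module $M$ killed by $\ell$ sits in a short exact sequence $0 \to IM \to M \to M/IM \to 0$ whose outer terms are modules over $\pi_0(\tilde R_\ell)/I \cong \mb F_\ell$, i.e.\ modules on which the action \emph{does} factor through $\varepsilon$; their Eilenberg--MacLane spectra are local by the adjunction remark, so $\eilm{M}$ is local by thickness. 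Feeding the homotopy groups of $\tilde R_\ell/\ell$ (which vanish above degree $2$) into its Postnikov tower then shows $\tilde R_\ell/\ell$ is local, which is the base case your induction needs. Note that for the full ring $R[H]_\ell$ the analogous ideal does not square into $(\ell)$ (e.g.\ $(a_\zeta - |\zeta|)^2$ for $\zeta$ of order prime to $\ell$), so some argument of this kind, specific to $\tilde R$, is unavoidable; the base case cannot be purely formal.
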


\begin{cor}
\label{cor:locahomology}
The homology of the $\eilm{\mb F_\ell}$--localization of $R[H]$ is
the same as the homotopy of $\tilde R_\ell$:
\[
\pi_*(\tilde R_\ell) \cong \left\{
\begin{matrix}
\widehat \oplus_{\zeta \in \mu_{\ell^\infty}} \mb Z_\ell a_\zeta \hfill &
\hbox{in dimension 0}\\
\widehat \oplus_{\zeta \in \mu_{\ell^\infty}} \left[\mb Z_\ell b_\zeta \oplus
\mb Z_\ell c_\zeta\right] \hfill & \hbox{in dimension 1}\\
\widehat \oplus_{\zeta \in \mu_{\ell^\infty}} \mb Z_\ell d_\zeta \hfill &
\hbox{in dimension 2}\\
0 \hfill & \hbox{otherwise.} \hfill 
\end{matrix}
\right.
\]
The products are given as in \fullref{cor:irrhomology}.
\end{cor}

The notation $\widehat \oplus$ denotes direct sum in the category of
$\ell$--complete modules:
\[
\widehat{\oplus} M_\alpha = \lim \oplus M_\alpha / \ell^n.
\]

\begin{proof}
Recall from \fullref{sec:completions} that $\eilm{\mb F_\ell}$--local
objects are closed under homotopy limits, $\eilm{\mb F_\ell}$--modules,
and exact triangles.  We have exact triangles
\[
\tilde R_\ell / \ell \longoverto^{\ell^n} \tilde R_\ell / \ell^{n+1} \longrightarrow
\tilde R_\ell / \ell^n
\]
for all $n$.  We will demonstrate that $\tilde R_\ell/\ell$ is $\eilm{\mb
F_\ell}$--nilpotent.  It then follows that $\tilde R_\ell/\ell^n$ is $\eilm{\mb
F_\ell}$--nilpotent for all $n$.  Because $\tilde R_\ell \simeq \holim
\tilde R_\ell/\ell^n$, $\tilde R_\ell$ is $\eilm{\mb F_\ell}$--local.

Consider the kernel $I$ of the map $\pi_0(\tilde R_\ell) \to \mb F_\ell$.  $I$ is 
generated by $\ell$ and by the elements $a_\zeta$ for $\zeta \in
\mu_{\ell^\infty}$, $\zeta \neq 1$.  In particular, the
multiplication rules show that $\ell$ divides $a_\zeta a_{\zeta'}$ for
any $\zeta, \zeta' \neq 1$, so $I^2 \subset (\ell)$.

If $M$ is any module over $\pi_0(\tilde R_\ell)$ that is acted on trivially by
$\ell$, there is a short exact sequence of $\pi_0(\tilde R_\ell / \ell)$--modules
\[
0 \to IM \to M \to M/IM \to 0.
\]
Both $IM$ and $M/ IM$ are acted on trivially by $I$.  They are
modules over $\pi_0(\tilde R_\ell)/I \cong \mb F_\ell$, and therefore the
Eilenberg--MacLane module spectra $\eilm{(IM)}$ and $\eilm{(M/IM)}$ are
local as $\tilde R_\ell$--modules.  Because local modules are closed
under exact triangles, $\eilm{M}$ is $\eilm{\mb F_\ell}$--local as an
$\tilde R_\ell$--module.

Consider the Postnikov tower for $\tilde R_\ell/\ell$, which
terminates after degree 2.
$$\xymatrix{
\ar[d] {*} & \\
(\tilde R_\ell / \ell)\langle 2\rangle \ar[d] \ar[r] & \Sigma^2
\eilm{(\pi_2(\tilde R_\ell / \ell))} \\
(\tilde R_\ell / \ell)\langle 1\rangle \ar[d] \ar[r] & \Sigma
\eilm{(\pi_1(\tilde R_\ell / \ell))} \\
(\tilde R_\ell / \ell) \ar[r] & \eilm{(\pi_0(\tilde R_\ell / \ell))} \\
}
$$
The Eilenberg--MacLane objects are modules over $\eilm{\pi_0(\tilde R_\ell)}$
and are acted on trivially by $\ell$.  They are therefore $\eilm{\mb
F_\ell}$--local as $\tilde R_\ell$--modules.  Consequently, $\tilde R_\ell / \ell$
is $\eilm{\mb F_\ell}$--local, as desired.
\end{proof}

\subsection{Derived completion of $R[H]$}
\label{sec:heisnilp}

In this section, we show that the computation of the homotopy groups
of the $\eilm{\mb F_\ell}$--localization of $R[\compl{H}_\ell]$ from the
previous section is actually a computation of the homotopy of the derived
completion of $R[\compl{H}_\ell]$, as explained in the introduction.
Consider the $\eilm{\mb F_\ell}$--nilpotent completion of the
$\mb S$--algebra $R[\compl{H}_\ell]$.  By \fullref{lem:boustocomp}
it is equivalent to the $\eilm{\mb F_\ell}$--nilpotent completion of
the $\mb S$--algebra $\tilde R$ constructed in previous sections.  Our
claim is as follows:

\begin{thm}
The pro-object $\{\tilde R/\ell^n\}$ is an $\eilm{\mb
F_\ell}$--nilpotent completion of $\tilde R$.
\end{thm}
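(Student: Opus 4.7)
The plan is to verify the two defining conditions for $\{\tilde R/\ell^n\}$ to be an $E$-nilpotent completion of $\tilde R$, where $E = \eilm{\mb F_\ell}$: (i) each $\tilde R/\ell^n$ is $E$-nilpotent, and (ii) for every $E$-nilpotent $\tilde R$-module $X$ the natural map $\lim_n \mapset{\tilde R}{\tilde R/\ell^n}{X} \to \mapset{\tilde R}{\tilde R}{X}$ is an isomorphism. By the remark following the definition, in checking (ii) it suffices to take $X$ to be an $\eilm{\mb F_\ell}$-module.

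For (i), I would first show that $\tilde R/\ell$ is $E$-nilpotent. By \fullref{cor:irrhomology}, $\pi_*(\tilde R)$ is torsion-free and concentrated in degrees $0$, $1$, $2$, so $\pi_*(\tilde R/\ell) = \pi_*(\tilde R)/\ell$ is concentrated in the same degrees, and $\tilde R/\ell$ has a finite Postnikov tower whose layers are Eilenberg--MacLane $\tilde R$-modules on modules over $\pi_0(\tilde R)/\ell$. The key algebraic fact is that the augmentation ideal
\[
\bar{I} = \ker\bigl(\pi_0(\tilde R)/\ell \to \mb F_\ell\bigr)
\]
squares to zero: from the multiplication rule $a_\zeta a_{\zeta'} = (|\zeta||\zeta'|/|\zeta\zeta'|)\, a_{\zeta\zeta'}$ in \fullref{cor:irrhomology}, the coefficient is divisible by $\ell$ whenever both $\zeta$ and $\zeta'$ are nontrivial $\ell$-power roots of unity. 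Hence any module $M$ over $\pi_0(\tilde R)/\ell$ admits a two-step filtration $0 \subset \bar{I}M \subset M$ whose subquotients are $\mb F_\ell$-vector spaces, so the associated Eilenberg--MacLane $\tilde R$-module is a two-step extension of $E$-modules and is therefore $E$-nilpotent. Combining the three Postnikov layers gives $E$-nilpotence of $\tilde R/\ell$; an induction on $n$ using the cofiber sequence $\tilde R/\ell \to \tilde R/\ell^{n+1} \to \tilde R/\ell^n$, obtained from the octahedral axiom on $\tilde R \xrightarrow{\ell} \tilde R \xrightarrow{\ell^n} \tilde R$, then handles every $\tilde R/\ell^n$.

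For (ii), let $X$ be an $\eilm{\mb F_\ell}$-module. Applying $\mapset{\tilde R}{-}{X}$ to the cofiber sequence $\tilde R \xrightarrow{\ell^n} \tilde R \to \tilde R/\ell^n$ and using that $\ell$ annihilates $\pi_*(X)$ produces the short exact sequence
\[
0 \to \pi_1(X) \to \mapset{\tilde R}{\tilde R/\ell^n}{X} \to \pi_0(X) \to 0.
\]
The tower map $\tilde R/\ell^{n+1} \to \tilde R/\ell^n$ fits in the map of defining cofiber sequences whose left-hand vertical arrow is multiplication by $\ell$ on $\tilde R$ and whose middle vertical arrow is the identity. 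Chasing this through shows the induced map on the short exact sequences is the identity on the $\pi_0(X)$ quotient and is multiplication by $\ell$, hence zero, on the $\pi_1(X)$ subgroup. Consequently the $\pi_1(X)$ contributions are killed by the tower, $\lim^1$ vanishes, and the natural map of (ii) reduces to the identity on $\pi_0(X) = \mapset{\tilde R}{\tilde R}{X}$.

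The main obstacle is (i): the identification $\bar{I}^2 = 0$ is an arithmetic fact about the dimensions $|V_\zeta| = |\zeta|$ that must be extracted from the explicit ring structure of \fullref{cor:irrhomology}, and without it the Postnikov layers would not be manifestly $E$-nilpotent. Once this is in hand, the Postnikov argument is short because the tower is finite, and (ii) follows by direct manipulation of the long exact sequence together with the nilpotence of $\ell$ on $\pi_*(X)$.
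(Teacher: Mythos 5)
Your proposal is correct and follows essentially the same route as the paper: the $\eilm{\mb F_\ell}$--nilpotence of $\tilde R/\ell^n$ via the square-zero (mod $\ell$) augmentation ideal, the finite Postnikov tower, and induction on $n$ is exactly the content the paper imports from the preceding section, and your mapping-out computation, with transition maps acting as the identity on $\pi_0(X)$ and as $\ell = 0$ on $\pi_1(X)$, is the paper's diagram chase verbatim. One small remark: the universal property is checked by taking a colimit of the groups $\mapset{\tilde R}{\tilde R/\ell^n}{X}$, so no $\lim^1$ term ever appears; your comment about its vanishing is harmless but unnecessary.
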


\begin{proof}
The objects $\tilde R/\ell^n$ are $\eilm{\mb F_\ell}$--nilpotent, as
shown in \fullref{cor:locahomology}.

Let $X$ be an $\eilm{\mb F_\ell}$--module.
For any $n$, consider the following map of cofiber sequences.
$$\xymatrix{
\tilde R \ar[r]^{\ell^n} \ar[d]_\ell &
\tilde R \ar[r]^\pi \ar[d]_1 &
\tilde R/\ell^n \ar[d]_\pi \\
\tilde R \ar[r]_{\ell^{(n-1)}} &
\tilde R \ar[r]_\pi &
\tilde R/\ell^{n-1} \\
}$$
Apply $\mapset{\tilde R}{-}{X}$ to the above diagram.  Multiplication
by $\ell$ is zero on $\pi_* X$, so we get the following
map of short exact sequences.
$$\xymatrix{
0 \ar[r]&
\pi_1 X \ar[r] \ar[d]^0 &
\mapset{\tilde R}{\tilde R/\ell^{(n-1)}}{X} \ar[r] \ar[d] &
\mapset{\tilde R}{\tilde R}{X} \ar[d]^1 \ar[r] &
0 \\
0 \ar[r]&
\pi_1 X \ar[r] &
\mapset{\tilde R}{\tilde R/\ell^n}{X} \ar[r] &
\mapset{\tilde R}{\tilde R}{X} \ar[r]&
0
}$$
Taking colimits in $n$ gives the desired result.
\end{proof}

As a result, the homotopy groups of $\compl{\tilde R}_{\eilm{\mb
F_\ell}}$ are the pro-groups $\{\pi_*(\tilde R/\ell^n)\}$.  The ring
$\tilde R$ has no $\ell$--torsion, so $\pi_*(\tilde R/\ell^n) \cong
\pi_*(\tilde R)/\ell^n$, which are the same as the homotopy groups of
$\pi_*(\tilde R_\ell)$ viewed as a pro-system.

\bibliographystyle{gtart} \bibliography{link}

\end{document}